\newcommand{\A}{{\mathcal{A}}}
\newcommand{\B}{{\mathcal{B}}}
\newcommand{\C}{{\mathcal{C}}}
\newcommand{\F}{{\mathcal{F}}}
\newcommand{\fct}{{\mathbf{Fct}}}
\newcommand{\Pp}{{\mathcal{P}}}
\newcommand{\pol}{{\mathcal{P}ol}}
\newcommand{\E}{{\mathcal{E}}}
\newcommand{\M}{{\mathcal{M}}}
\newcommand{\col}{{\rm colim}\,}
\newcommand{\Md}{\text{-}\mathbf{Mod}}
\newcommand{\Si}{\mathfrak{S}}
\newcommand{\pf}{\mathbf{pf}}
\title{Sur la noethérianité locale des foncteurs polynomiaux}
\author{Aur\'elien Djament\thanks{CNRS, LAGA (UMR 7539), Institut Galilée,
99 avenue J.-B. Clément,
93430 VILLETANEUSE,
FRANCE, aurelien.djament@cnrs.fr.}\; et Antoine Touz\'e\thanks{Université de Lille, laboratoire Paul Painlev\'e (UMR 8524),
Cité scientifique, bât. M2,
59655 VILLENEUVE D'ASCQ CEDEX,
FRANCE, antoine.touze@univ-lille.fr.}}
\date{Mai 2023}
\newtheorem{thi}{Th\'eor\`eme}
\newtheorem{thm}{Th\'eor\`eme}[section]
\newtheorem{pr}[thm]{Proposition}
\newtheorem{lm}[thm]{Lemme}
\theoremstyle{definition}
\newtheorem{defi}[thm]{D\'efinition}
\newtheorem{nota}[thm]{Notation}
\newtheorem*{conv}{Conventions}
\theoremstyle{remark}
\newtheorem{rem}[thm]{Remarque}
\newtheorem{app}[thm]{Application}
\newtheorem*{rqi}{Remarque}
\begin{document}

\maketitle

\begin{abstract}
Soient $A$ un anneau commutatif de type fini et $k$ un anneau commutatif noethérien. On montre que, dans la catégorie des foncteurs des $A$-modules projectifs de type fini vers les $k$-modules, tout foncteur polynomial de type fini est noethérien et possède une résolution projective de type fini.
\end{abstract}

{\selectlanguage{english}
{\begin{abstract}
Let $A$ be a finitely-generated commutative ring and $k$ a noetherian commutative ring. We show that, in the category of functors from finitely-generated projective $A$-modules to $k$-modules, each finitely-generated polynomial functor is noetherian and has a finitely-generated projective resolution.
\end{abstract}
}}

\bigskip

\noindent
{\em Mots-clefs : } foncteurs polynomiaux, foncteurs polynomiaux stricts, noethérianité, résolutions projectives de type fini.

\medskip

\noindent
{\em Classification MSC 2020 : } 16P40, 18A25, 18E05, 18G10, 20G43.


\section*{Introduction}

Introduits par Eilenberg et MacLane \cite{EML} au début des années 1950 à des fins de topologie algébrique, les \emph{foncteurs polynomiaux} entre catégories de modules n'ont cessé de montrer leur intérêt dans plusieurs branches des mathématiques : non seulement en topologie algébrique (avec de nouvelles motivations apparues à partir de la fin des années 1980 --- cf. \cite{HLS}), mais aussi en théorie des représentations \cite{Green,MD-sym} ou en cohomologie des groupes \cite{FFSS,Betley,Sco,DV,DjaR}. Des liens féconds ont été mis en évidence, à partir des années 1990, entre les foncteurs polynomiaux \emph{stricts} (dont la définition est donnée dans un contexte très général au §\,\ref{snPp}) et les représentations et la cohomologie des groupes algébriques \cite{FS,SFB,T-Duke2010,TvdK,T-Adv2010}. Intuitivement, les foncteurs polynomiaux ordinaires (à la Eilenberg-MacLane) sont aux foncteurs polynomiaux stricts ce que les fonctions polynomiales sont aux polynômes formels. Un foncteur d'oubli exact et fidèle associe à tout foncteur polynomial strict un foncteur polynomial ordinaire, mais même entre catégories de modules sur des anneaux très raisonnables, une structure polynomiale \emph{stricte} sur un foncteur polynomial ordinaire n'existe pas toujours (et n'est pas nécessairement unique). Les foncteurs polynomiaux stricts possèdent des propriétés de structure plus rigides et plus accessibles que les foncteurs polynomiaux ordinaires, et leur cohomologie est plus facile à calculer, donnant lieu à des interactions fécondes ---  cf. \cite{FFSS,T-torsion}.

Dans le présent article, on s'intéresse aux propriétés de finitude\,\footnote{Pour des rappels sur les propriétés de finitude utiles dans les catégories de foncteurs, on pourra consulter par exemple \cite[partie~IV]{DTV} et \cite{DT-schw}.}, notamment à la propriété \emph{noethérienne} (c'est-à-dire la stabilisation des suites croissantes de sous-objets), des foncteurs polynomiaux, ordinaires ou stricts, entre catégories de modules sur des anneaux raisonnables. 

Un théorème remarquable de Putman, Sam et Snowden \cite{PSam,SamSn} montre que la catégorie $\F(A,k)$ de \emph{tous} les foncteurs (sans condition polynomiale) depuis les $A$-modules projectifs de type fini sur un anneau $A$ vers les $k$-modules est \emph{localement noethérienne} (c'est-à-dire engendrée par un ensemble d'objets noethériens) si $A$ est un anneau \emph{fini} et $k$ un anneau noethérien. En revanche, si l'anneau $A$ est infini et l'anneau $k$ non nul, il n'est pas difficile de voir \cite[prop.~11.1]{DTV} que la catégorie $\F(A,k)$ n'est \emph{jamais} localement noethérienne.

La sous-catégorie $\pol(A,k)$ des foncteurs \emph{polynomiaux} de $\F(A,k)$ possède bien plus souvent que $\F(A,k)$ de bonnes propriétés de finitude. Il est ainsi facile de voir que $\pol(A,k)$ est \emph{localement finie} lorsque l'anneau $A$ est fini et que $k$ est un corps, et \emph{localement noethérienne} si le groupe additif sous-jacent à $A$ est de type fini et l'anneau $k$ noethérien, juste en examinant l'évaluation des foncteurs sur $A^n$ pour $n$ assez grand (cf. \cite[lemme~11.10]{DTV} ou \cite[prop.~4.8]{Dja-FM}). Toutefois, $\pol(A,k)$ n'est généralement pas localement noethérienne, même si $A$ et $k$ sont des anneaux noethériens, et ce type de problème de noethérianité ne semble guère avoir été étudié jusqu'alors.

L'un des résultats principaux du présent article est le théorème suivant :
\begin{thi}~\label{thi1} Si $A$ est un anneau commutatif de type fini et $k$ un anneau commutatif noethérien, alors la catégorie $\pol(A,k)$ est localement noethérienne.
\end{thi}
Le théorème précédent est une version légèrement simplifiée du théorème~\ref{th-noethbis}  du corps de l'article. Il s'obtient à partir de trois ingrédients différents :
\begin{enumerate}
\item un résultat analogue de noethérianité pour des foncteurs polynomiaux \emph{stricts}, le théorème~\ref{th-pstrnoeth} (cela fait l'objet du §\,\ref{snstr}). Celui-ci se déduit du théorème d'E. Noether de préservation par invariants sous l'action d'un groupe fini de la propriété de type fini pour les algèbres commutatives sur un anneau commutatif noethérien.
\item Des résultats de notre travail récent \cite{DT-schw} donnant des conditions suffisantes pour qu'un foncteur de $\pol(A,k)$ possède une résolution projective de type fini dans $\F(A,k)$, dont on déduit au §\,\ref{snpfi} un critère pour que l'image dans $\F(A,k)$ d'un foncteur polynomial \emph{strict} de type fini possède une résolution projective de type fini.
\item Un théorème fondamental de structure des foncteurs polynomiaux dû à Pirashvili \cite{Pira88} (rappelé à la fin du §\,\ref{sect-fct}) qui implique que, \emph{modulo des foncteurs polyomiaux de degré strictement inférieur} à $d$, tout foncteur polynomial de degré $d$ de $\pol(A,k)$ possède une structure polynomiale \emph{stricte}.
\end{enumerate}

Le deuxième ingrédient nous permet d'établir, \emph{en même temps que le théorème~\ref{thi1}} (les deux résultats s'obtiennent de façon imbriquée par une récurrence sur le degré polynomial), le résultat suivant, qui  constitue une version légèrement simplifiée du théorème~\ref{th-princ} donné en fin d'article :
\begin{thi}~\label{thi2} Si $A$ est un anneau commutatif de type fini et $k$ un anneau commutatif noethérien, alors tout foncteur de type fini de $\pol(A,k)$ possède une résolution projective de type fini dans $\F(A,k)$.
\end{thi}

Le théorème~\ref{thi2} est significatif au vu des liens entre l'homologie des groupes linéaires et l'homologie des foncteurs : ceux-ci concernent des foncteurs polynomiaux, mais font apparaître de l'algèbre homologique dans la catégorie $\F(A,k)$.

Le troisième ingrédient susmentionné est lié à la notion de \emph{recollement} de catégories abéliennes (cf. par exemple \cite{Psa}), qui est essentiellement équivalente à l'étude des quotients d'une catégorie abélienne par une sous-catégorie bilocalisante et des adjonctions associées. Il ne semble toutefois pas exister de critère général simple pour qu'un recollement de deux catégories abéliennes localement noethériennes soit localement noethérien. Dans le cas du recollement de Pirashvili associé aux foncteurs polynomiaux de degré au plus $d$, c'est une propriété bien particulière des adjonctions associées qui permet de conclure (en utilisant les deux autres ingrédients). En effet, elles s'obtiennent à partir de l'adjonction entre effets croisés et précomposition par la diagonale itérée, qui sont des foncteurs \emph{exacts}, et de l'action d'un groupe \emph{fini} (le groupe symétrique $\Si_d$). Ces arguments sont mis en \oe uvre dans la section~\ref{snprinc}, qui contient les principaux théorèmes de l'article.

\begin{rqi}
Malgré leurs titre et sujet similaires, le présent article et \cite{Dja-FM}, qui traite principalement de foncteurs polynomiaux sur des catégories \emph{non additives}, se recoupent très peu tant dans leurs résultats que leurs méthodes.

L'article de Draisma \cite{Dra} montre pour sa part un théorème de noethérianité \emph{topologique} pour des foncteurs polynomiaux \emph{stricts} sur un corps commutatif infini. Là encore, les méthodes et résultats de ce travail et du nôtre sont indépendants.
\end{rqi}

\paragraph*{Remerciements.} Les auteurs ont bénéficié du soutien partiel de l’Agence Nationale de la Recherche, via le projet ANR ChroK (ANR-16-CE40-0003), le Labex CEMPI (ANR-11-LABX-0007-01), et, pour le premier auteur, le projet ANR AlMaRe (ANR-19-CE40-0001-01). Ils ne soutiennent pas pour autant le principe de l’ANR, dont ils revendiquent la restitution des moyens aux laboratoires sous forme de crédits récurrents.

\begin{conv}
Dans tout cet article, $k$ désigne un anneau commutatif.

Sauf mention du contraire, les produits tensoriels de base non spécifiée sont pris sur $k$.

On note $\mathbf{Ab}$ la catégorie des groupes abéliens. Si $A$ est un anneau, on note $A\Md$ la catégorie des $A$-modules à gauche et $\mathbf{P}(A)$ la catégorie des $A$-modules \emph{à droite} projectifs de type fini.

On note $k[-]$ le foncteur de $k$-linéarisation des ensembles vers $k\Md$.
\end{conv}

\section{Foncteurs polynomiaux ordinaires}\label{sect-fct}

Si $\C$ et $\E$ sont des catégories, avec $\C$ essentiellement petite, on note $\fct(\C,\E)$ la catégorie des foncteurs de $\C$ vers $\E$. Si $\E$ est abélienne, il en est de même pour $\fct(\C,\E)$. Nous nous intéresserons principalement au cas où $\E=k\Md$, auquel cas on notera $\F(\C;k)$ pour $\fct(\C,k\Md)$, et plus particulièrement au cas où $\C=\mathbf{P}(A)$ pour un anneau $A$ --- on notera $\F(A,k)$ pour $\F(\mathbf{P}(A);k)$. On renvoie par exemple à \cite[§\,1.2]{DTV} pour quelques généralités classiques sur ces catégories.

Si $\A$ est une catégorie préadditive essentiellement petite et $\E$ une catégorie abélienne, on note $\mathbf{Add}(\A,\E)$ la catégorie (abélienne) des foncteurs additifs de $\A$ vers $\E$, et $\mathbf{Add}(\A;k):=\mathbf{Add}(\A,k\Md)$. Ainsi, $\mathbf{Add}(\mathbf{P}(A),\mathbf{Ab})\simeq A\Md$. Si de plus $\A$ et $\E$ sont $k$-linéaires, on note $\mathbf{Add}_k(\A,\E)$ la sous-catégorie (abélienne) des foncteurs $k$-linéaires de $\mathbf{Add}(\A,\E)$. Cette situation généralise la précédente au vu de l'isomorphisme canonique $\fct(\C,\E)\simeq\mathbf{Add}_k(k[\C],\E)$, où $k[\C]$ désigne la catégorie $k$-linéaire ayant les mêmes objets que $\C$ et dont les $k$-modules de morphismes s'obtiennent en linéarisant les ensembles de morphismes de $\C$.

Si $A$ est une $k$-algèbre et $\A$ une catégorie préadditive $k$-linéaire, la catégorie $\mathbf{Add}_k(\A,A\Md)$ est engendrée par les foncteurs $\A(a,-)\otimes A$ où $a$ parcourt les objets de $\A$. Ces foncteurs sont projectifs et de type fini. Un foncteur $F$ de $\mathbf{Add}_k(\A,A\Md)$ est de type fini si et seulement s'il est quotient d'une somme directe finie de tels foncteurs. (On rappelle qu'un objet $X$ d'une catégorie abélienne cocomplète $\E$ est dit \emph{de type fini} si l'application canonique $$\underset{J}{\col}\mathrm{Hom}(X,-)\circ\Phi\to\mathrm{Hom}(X,\underset{J}{\col}\Phi)$$
est injective pour tout foncteur $\Phi : J\to\E$, où $J$ est une petite catégorie \emph{filtrante}.)

Si l'on suppose que $\A$ est une catégorie \emph{additive} (essentiellement petite) et que $\E$ est une catégorie abélienne, on dispose de la notion fondamentale de \emph{foncteur polynomial} dans $\fct(\A,\E)$, due à Eilenberg et MacLane \cite[chap.~II]{EML}. Celle-ci est définie à partir des \emph{effets croisés}, c'est-à-dire des foncteurs $cr_n : \fct(\A,\E)\to\fct(\A^n,\E)$ définis (pour $n\in\mathbb{N}$) par
$$cr_n(F)(a_1,\dots,a_n)=\operatorname{Ker}\Big(F(a_1\oplus\dots\oplus a_n)\to\bigoplus_{i=1}^n F\Big(\bigoplus_{j\ne i}a_j\Big)\Big)$$
(les morphismes étant induits par les projections canoniques). Les foncteurs polynomiaux de degré au plus $d$ sont par définition les foncteurs $F$ tels que $cr_{d+1}(F)=0$. Ils forment une sous-catégorie épaisse, stable par limites et colimites, de $\fct(\A,\E)$, notée $\pol_d(\A,\E)$.  On note aussi $\pol_d(A,B):=\pol_d(\mathbf{P}(A),B)$, quand $A$ et $B$ sont des anneaux.

On renvoie à \cite[§\,2]{DTV} pour davantage de rappels sur les foncteurs polynomiaux. Nous aurons notamment besoin des notions et résultat suivants, dus à Pirashvili \cite{Pira88}, pour lesquels on suit la présentation de \cite[§\,2.6]{DTV} (auquel on renvoie pour plus de détails). La restriction de $cr_d$ à $\pol_d(\A,\E)$ prend ses valeurs dans la sous-catégorie\,\footnote{Si $\A$ et $\B$ sont des catégories préadditives, $\A\otimes_\mathbb{Z}\B$ désigne la catégorie préadditive ayant les mêmes objets que $\A\times\B$ et dont les morphismes sont donnés par $(\A\otimes_\mathbb{Z}\B)((a,b),(a',b')):=\A(a,a')\otimes_\mathbb{Z}\B(b,b')$.} $\mathbf{Add}_d(\A,\E)\simeq\mathbf{Add}(\A^{\otimes_\mathbb{Z} d},\E)$ de $\fct(\A^d,\E)$ des multifoncteurs additifs par rapport à chacune des $d$ variables. De plus $cr_d(F)$ est symétrique en ces variables : on dispose d'isomorphismes naturels $cr_d(F)(a_{\sigma(1)},\dots,a_{\sigma(1)})\simeq cr_d(F)(a_1,\dots,a_d)$ pour toute permutation $\sigma\in\Si_d$, compatibles à la composition des permutations ; on note $\Sigma\mathbf{Add}_d(\A,\E)$ comme dans \cite[§\,2.6]{DTV} la catégorie des multifoncteurs de $\mathbf{Add}_d(\A,\E)$ munis de tels isomorphismes, et $\mathrm{Cr}_d : \pol_d(\A,\E)\to\Sigma\mathbf{Add}_d(\A,\E)$ le foncteur induit par $cr_d$. On note 
\begin{equation}\label{eq-D}
    \Delta_d : \Sigma\mathbf{Add}_d(\A,\E)\to\pol_d(\A,\E)
\end{equation}
la composée du foncteur d'oubli $\Sigma\mathbf{Add}_d(\A,\E)\to\mathbf{Add}_d(\A,\E)$ et de la précomposition par le foncteur de diagonale $d$-itérée $\A\to\A^d$. La structure symétrique sur les objets de $\Sigma\mathbf{Add}_d(\A,\E)$ munit $\Delta_d$ d'une action du groupe symétrique $\Si_d$. L'énoncé suivant est une reformulation d'une partie du théorème fondamental de Pirashvili \cite[th.~2.1]{Pira88}\,\footnote{Pirashvili donne le résultat dans le cas où $\A=\mathbf{P}(A)$ et $\E=\mathbf{Ab}$, mais la même démonstration vaut dans le cas général.} identifiant la catégorie quotient $\pol_d(\A,\E)/\pol_{d-1}(\A,\E)$ à $\Sigma\mathbf{Add}_d(\A,\E)$.

\begin{pr}[Pirashvili]\label{pr-pira} Soient $\A$ une catégorie additive essentiellement petite et $\E$ une catégorie abélienne. Le foncteur $\mathrm{Cr}_d : \pol_d(\A,\E)\to\Sigma\mathbf{Add}_d(\A,\E)$ est adjoint à gauche au foncteur $\Delta_d^{\Si_d}$, où l'exposant indique les invariants sous l'action du groupe $\Si_d$. De plus, pour tout foncteur $F$ de $\pol_d(\A,\E)$, le noyau et le conoyau de l'unité $F\to\Delta_d^{\Si_d}\mathrm{Cr}_d(F)$ appartiennent à $\pol_{d-1}(\A,\E)$.
\end{pr}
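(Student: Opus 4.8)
The plan is to deduce everything from two facts about the cross-effect functor: a ``non-symmetric'' adjunction between $cr_d$ and precomposition by the diagonal, and the computation of the $\Si_d$-invariants of the cross-effect of a diagonal. First I would record that $cr_d\colon\fct(\A,\E)\to\fct(\A^d,\E)$ is \emph{exact}: by construction $cr_d(F)(a_1,\dots,a_d)$ is the kernel of a natural epimorphism $F(a_1\oplus\cdots\oplus a_d)\to\bigoplus_i F(\bigoplus_{j\ne i}a_j)$ between exact functors of $F$, and this epimorphism is \emph{split} (a section is induced by the canonical inclusions $\bigoplus_{j\ne i}a_j\hookrightarrow\bigoplus_j a_j$); hence $cr_d(F)$ is a natural direct summand of the exact functor $F\mapsto F(a_1\oplus\cdots\oplus a_d)$. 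Writing $\delta\colon\A\to\A^d$ for the diagonal and $\delta^*$ for precomposition by $\delta$, multi-additivity gives the classical computation, for $G\in\mathbf{Add}_d(\A,\E)$,
\begin{equation*}
cr_d(\delta^* G)(a_1,\dots,a_d)\;\cong\;\bigoplus_{\sigma\in\Si_d}G(a_{\sigma(1)},\dots,a_{\sigma(d)})\;=:\;\bigoplus_{\sigma\in\Si_d}\sigma^*G(a_1,\dots,a_d),
\end{equation*}
the cross-effect selecting exactly the summands of $G(\bigoplus_i a_i,\dots,\bigoplus_i a_i)$ indexed by the \emph{bijections} $\{1,\dots,d\}\to\{1,\dots,d\}$.

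For the adjunction I would first prove the non-symmetric statement that $cr_d\colon\pol_d(\A,\E)\to\mathbf{Add}_d(\A,\E)$ is left adjoint to $\delta^*$ (which lands in $\pol_d$ on multi-additive functors). The bijection sends $\psi\colon cr_dF\to G$ to the composite of $F(\delta)\colon F(a)\to F(a^{\oplus d})$, the projection onto the top cross-effect $cr_d(F)(a,\dots,a)$, and $\psi_{(a,\dots,a)}$; its bijectivity follows from the cross-effect decomposition $F(a^{\oplus d})\cong\bigoplus_{S\subseteq\{1,\dots,d\}}cr_{|S|}(F)(a,\dots,a)$ together with $cr_{d+1}F=0$. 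Granting this, the symmetric refinement is formal: a morphism in $\Sigma\mathbf{Add}_d$ is a $\Si_d$-equivariant morphism in $\mathbf{Add}_d$, so for $F\in\pol_d$ and $G\in\Sigma\mathbf{Add}_d$,
\begin{equation*}
\mathrm{Hom}_{\Sigma\mathbf{Add}_d}(\mathrm{Cr}_d F,G)=\mathrm{Hom}_{\mathbf{Add}_d}(cr_d F,G)^{\Si_d}\cong\mathrm{Hom}_{\pol_d}(F,\delta^*G)^{\Si_d}=\mathrm{Hom}_{\pol_d}(F,\Delta_d^{\Si_d}G),
\end{equation*}
the last equality because $F$ carries the trivial action and $\mathrm{Hom}(F,-)$ preserves the limit defining the invariants. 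This establishes $\mathrm{Cr}_d\dashv\Delta_d^{\Si_d}$, the unit $\eta_F\colon F\to\Delta_d^{\Si_d}\mathrm{Cr}_d F$ being the (manifestly $\Si_d$-invariant, since $\delta$ is symmetric) composite of $F(\delta)$ with the projection onto $cr_d(F)(a,\dots,a)$.

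For the second assertion the crux is that the counit $\varepsilon_G\colon\mathrm{Cr}_d\Delta_d^{\Si_d}G\to G$, induced by the projection onto the identity-indexed summand, is an \emph{isomorphism}. Since $cr_d$ is exact and additive it commutes with the finite limit computing $\Si_d$-invariants, so the displayed computation gives
\begin{equation*}
cr_d\bigl(\Delta_d^{\Si_d}G\bigr)\;\cong\;\Bigl(\,\bigoplus_{\sigma\in\Si_d}\sigma^*G\,\Bigr)^{\Si_d},
\end{equation*}
where $\Si_d$ permutes the summands regularly through the symmetric structure of $G$. The invariants of such a regular (induced) object are carried isomorphically onto the single copy $G$ by the projection to the identity-indexed summand; crucially this holds over an \emph{arbitrary} $k$, with no division by $d!$, as an invariant tuple is freely determined by its identity-component. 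Thus $\varepsilon_G$ is an isomorphism, $\Delta_d^{\Si_d}$ is fully faithful, and the triangle identity $\varepsilon_{\mathrm{Cr}_d F}\circ\mathrm{Cr}_d(\eta_F)=\mathrm{id}$ forces $\mathrm{Cr}_d(\eta_F)$, hence $cr_d(\eta_F)$, to be an isomorphism. Finally $\operatorname{Ker}\eta_F$ and $\operatorname{Coker}\eta_F$ lie in the thick subcategory $\pol_d$ and are annihilated by the exact functor $cr_d$; since $\pol_{d-1}=\{H:cr_d H=0\}$ by definition, they belong to $\pol_{d-1}$.

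The main obstacle is the non-symmetric adjunction $cr_d\dashv\delta^*$ on $\pol_d$, together with the precise identification of the $\Si_d$-action on $cr_d(\delta^*G)$ as the regular one; once these are in place the symmetric upgrade and the kernel/cokernel statement are purely formal. In particular it is the \emph{integral} computation of the invariants of a regular representation that makes $cr_d(\eta_F)$ invertible over any noetherian $k$, without any characteristic hypothesis.
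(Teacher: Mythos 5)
The paper offers no proof of this proposition: it is presented as a reformulation of part of Pirashvili's theorem \cite{Pira88}, following the exposition of \cite[§\,2.6]{DTV}, with a footnote noting that Pirashvili's argument for $(\mathbf{P}(A),\mathbf{Ab})$ works verbatim for a general pair $(\A,\E)$. Your blind proof is therefore necessarily a different route: a direct construction of the adjunction from the cross-effect calculus. Its architecture is correct and is, in substance, the mechanism behind Pirashvili's recollement: (i) a non-symmetric adjunction $cr_d\dashv\delta^*$ between $\pol_d(\A,\E)$ and $\mathbf{Add}_d(\A,\E)$; (ii) its formal upgrade to $\mathrm{Cr}_d\dashv\Delta_d^{\Si_d}$ by passing to $\Si_d$-invariants of hom-groups, legitimate because the unit is $\Si_d$-invariant ($\delta$ being symmetric) and $\mathrm{Hom}(F,-)$ commutes with the finite limit defining invariants; (iii) the identification of $cr_d(\delta^*G)$ with the induced (regular) $\Si_d$-object $\bigoplus_{\sigma}\sigma^*G$, whose invariants project isomorphically onto the identity summand over any base --- this is indeed what makes the counit invertible with no hypothesis on $d!$; (iv) exactness of $cr_d$, which converts \guillemotleft\, $\mathrm{Cr}_d(\eta_F)$ iso \guillemotright\, into $cr_d(\operatorname{Ker}\eta_F)=cr_d(\operatorname{Coker}\eta_F)=0$, i.e. membership in $\pol_{d-1}$ by the very definition of polynomial degree. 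What your write-up buys is self-containedness and a transparent explanation of why the statement is characteristic-free.

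Two points need repair before this is a complete proof. First, your justification of the exactness of $cr_d$ is incorrect as stated: the defining map $F(a_1\oplus\dots\oplus a_d)\to\bigoplus_i F\big(\bigoplus_{j\ne i}a_j\big)$ is not an epimorphism in general --- for a nonzero constant functor it is a diagonal map, and for $d\ge 3$ it fails to be epi even for $F=\mathrm{id}$ (the target is too big) --- and the section you propose, with components induced by the inclusions, does not split it for $d\ge 3$, since the composite has nonvanishing off-diagonal entries $F(r_{i'}\circ s_i)$, $i\ne i'$. The correct (standard) argument is the one you implicitly rely on afterwards: the full decomposition $F(a_1\oplus\dots\oplus a_d)\cong\bigoplus_{S}cr_{|S|}(F)$, obtained by iterating the case $d=2$ (where the splitting does hold), exhibits $cr_d(F)$ as a natural direct summand of the exact functor $F\mapsto F(a_1\oplus\dots\oplus a_d)$. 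Second, the bijectivity of the non-symmetric adjunction, which you rightly single out as the crux, is asserted rather than verified. It is true, and the verification is finite: one composite is pure naturality of the cross-effect decomposition; the other reduces, after commuting projections past that decomposition, to the identity $(\mathrm{pr}_1\oplus\dots\oplus\mathrm{pr}_d)\circ\delta_a=\mathrm{id}_a$ in $\A$; and the multiadditivity of $cr_dF$ --- the one place where $cr_{d+1}F=0$ is genuinely used --- is what makes the unit and the statement meaningful. As it stands, your proposal delegates precisely this, the only non-formal step, to the reader.
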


\section{Foncteurs polynomiaux stricts}\label{snPp}

Rappelons \cite[chap.~III, §\,1]{Rob} que, si $V$ est un $k$-module, on définit l'algèbre des \emph{puissances divisées} de $V$ sur $k$, notée $\Gamma^*_k(V)$ ou simplement $\Gamma^*(V)$ s'il n'y a pas d'ambiguïté possible sur l'anneau de base $k$, comme la $k$-algèbre associative, commutative et unitaire engendrée par des éléments $v^{[n]}$ pour $v\in V$ et $n\in\mathbb{N}$ soumis aux relations suivantes :
\begin{itemize}
    \item $\forall v\in V,\qquad v^{[0]}=1$ ;
    \item $\forall (\lambda,v,n)\in k\times V\times\mathbb{N},\qquad (\lambda v)^{[n]}=\lambda^n.v^{[n]}$ ;
    \item $\forall (v,n,m)\in V\times\mathbb{N}\times\mathbb{N},\qquad v^{[n]}.v^{[m]}=\frac{(n+m)!}{n! m!}.v^{[n+m]}$ ;
    \item $\forall (v,w,n)\in V\times V\times\mathbb{N},\qquad (v+w)^{[n]}=\sum_{i+j=n}v^{[i]}.w^{[j]}\;$.
\end{itemize}
Cette algèbre est graduée par $\deg(v^{[n]}):=n$. On note $\Gamma^d_k(V)$, ou $\Gamma^d(V)$, la composante homogène de degré $d$ de l'algèbre graduée $\Gamma^*(V)$. On définit ainsi un endofoncteur $\Gamma^d$ des $k$-modules appelé $d$-ième puissance divisée. On renvoie à \cite[chap.~III]{Rob} pour les propriétés de base de ces foncteurs. Rappelons simplement que $\Gamma^d$ est un foncteur polynomial de degré $d$, qui préserve les épimorphismes et les colimites filtrantes, et que l'on dispose d'un morphisme naturel $\Gamma^d(V)\to (V^{\otimes d})^{\Si_d}$ qui est un isomorphisme si $V$ est un module \emph{plat}. Le foncteur $\Gamma^d_k$ est compatible au changement de l'anneau de base $k$ en le sens suivant : si $k\to A$ est un morphisme d'anneaux, avec $A$ commutatif, on dispose d'un isomorphisme $A$-linéaire $\Gamma^d_A(A\otimes_k V)\simeq A\otimes_k\Gamma^d_k(V)$ naturel en le $k$-module $V$ \cite[th.~III.3]{Rob}.

Le foncteur $\Gamma^d_k$ possède une structure canonique d'endofoncteur \emph{monoïdal symétrique} de la catégorie monoïdale symétrique $(k\Md,\otimes,k)$. Cela permet de relever $\Gamma^d_k(A)$ en un endofoncteur des $k$-algèbres (voir \cite{Rob80} pour une description détaillée). De même, si $\A$ est une catégorie $k$-linéaire, on définit une catégorie $k$-linéaire $\Gamma^d_k\A$ ayant les mêmes objets que $\A$ et dont les morphismes sont donnés par $(\Gamma^d_k\A)(a,b):=\Gamma^d_k(\A(a,b))$ et la composition par
$$\Gamma^d_k(\A(a,b))\otimes\Gamma^d_k(\A(b,c))\to\Gamma^d_k(\A(a,b)\otimes\A(b,c))\to\Gamma^d_k(\A(a,c))$$
où la première flèche est induite par la structure monoïdale de $\Gamma^d_k$ et la seconde par la composition de $\A$.

\begin{defi}
Soient $\A$ une petite catégorie $k$-linéaire, $\E$ une catégorie abélienne $k$-linéaire et $d\in\mathbb{N}$. On note $\Pp_{d;k}(\A,\E)$ la catégorie $\mathbf{Add}_k(\Gamma^d_k(\A),\E)$. Ses objets sont appelés \emph{foncteurs polynomiaux stricts} homogènes de degré $d$ sur $k$ de $\A$ vers $\E$.

On note $\Pp_{d;k}(\A;k)$ pour $\Pp_{d;k}(\A,k\Md)$ ; lorsque $A$ et $B$ sont des $k$-algèbres, on note $\Pp_{d;k}(A,B)$ pour $\Pp_{d;k}(\mathbf{P}(A),B\Md)$.
\end{defi}

Cette notion a été introduite originellement, dans le cas de $\Pp_{d;k}(k,k)$, par Friedlander et Suslin \cite[§\,2]{FS} (lorsque $k$ est un corps, généralisé aussitôt au cas d'un anneau commutatif quelconque dans \cite{SFB}). On pourra également consulter \cite[§\,2.1]{TouzeRingel} pour des généralités sur ces catégories.

\begin{rem}
On peut définir des foncteurs polynomiaux stricts \guillemotleft\, naïfs \guillemotright\, de la même manière en remplaçant le foncteur $\Gamma^d$ par le foncteur des tenseurs symétriques homogènes de degré $d$ (invariants de la $d$-ième puissance symétrique sous l'action de $\Si_d$). Les deux notions coïncident si $\A$ est $k$-plate, i.e. si tous les $k$-modules $\A(a,b)$ sont plats. La plupart des résultats que nous démontrerons pour les foncteurs polynomiaux stricts valent également pour les foncteurs polynomiaux stricts naïfs, mais ces derniers sont bien moins importants : en l'absence d'hypothèse de platitude, les tenseurs symétriques ont beaucoup moins de bonnes propriétés fonctorielles que les puissances divisées, qui sont reliées à la géométrie algébrique \cite{SFB}.
\end{rem}

Le résultat suivant se déduit du théorème de Gabriel-Popescu, de la même façon que dans le cas particulier classique où $A=B=k$ est un corps \cite[th.~3.2]{FS}.

\begin{pr}\label{pr-eqSchur} Soient $A$ et $B$ des $k$-algèbres et $n\ge d\ge 0$ des entiers. Alors le foncteur d'évaluation en $A^d$ induit une équivalence de catégories $k$-linéaire
$$\Pp_{d;k}(A,B)\xrightarrow{\simeq}(\Gamma^d_k(\M_n(A))\otimes B)\Md.$$
\end{pr}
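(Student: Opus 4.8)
\emph{Esquisse de démonstration (proposée).} Je poserais $\A:=\Gamma^d_k\mathbf{P}(A)$ et $\C:=\Pp_{d;k}(A,B)=\mathbf{Add}_k(\A,B\Md)$. D'après les généralités rappelées plus haut, $\C$ est une catégorie de Grothendieck dans laquelle les colimites se calculent au but, et les représentables $P_a:=\A(a,-)\otimes_k B$, pour $a$ parcourant les objets de $\A$ (les $A$-modules projectifs de type fini), en forment une famille génératrice de projectifs de type fini. L'anneau d'endomorphismes de l'objet $A^n$ dans $\A$ valant $\A(A^n,A^n)=\Gamma^d_k(\mathrm{End}_A(A^n))=\Gamma^d_k(\M_n(A))$, le plan est de montrer que le \emph{seul} objet $P:=P_{A^n}$ (pour un entier $n\ge d$ fixé) est un générateur projectif de $\C$, puis d'appliquer la forme de Morita du théorème de Gabriel-Popescu.

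D'abord, le lemme de Yoneda $k$-linéaire fournit un isomorphisme naturel $\mathrm{Hom}_\C(P_{A^n},F)\simeq F(A^n)$ : le foncteur d'évaluation en $A^n$ coïncide donc avec $\mathrm{Hom}_\C(P,-)$, dont l'anneau d'endomorphismes se calcule encore par Yoneda, $\mathrm{End}_\C(P)\simeq P(A^n)=\Gamma^d_k(\M_n(A))\otimes_k B$ (aux conventions d'opposé près, exactement comme dans le cas classique $A=B=k$ corps). Les colimites de $\C$ se calculant au but, $\mathrm{Hom}_\C(P,-)$ est exact et commute à toutes les colimites ; le théorème de Gabriel-Popescu garantit alors que $\mathrm{Hom}_\C(P,-)\colon\C\to\mathrm{End}_\C(P)^{\mathrm{op}}\Md$ est une équivalence dès que $P$ est un générateur. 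Tout se ramène donc à cette propriété de génération.

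Pour l'établir, je montrerais que chaque $P_V$ est facteur direct d'une somme directe finie de copies de $P_{A^n}$, en deux temps. D'abord, tout $V$ de $\mathbf{P}(A)$ est facteur direct d'un $A^N$, par $s\colon V\to A^N$ et $r\colon A^N\to V$ avec $rs=\mathrm{id}_V$ ; puisque $a\mapsto a,\ f\mapsto f^{[d]}$ définit un foncteur $\mathbf{P}(A)\to\A$ (conséquence de la structure monoïdale de $\Gamma^d_k$, qui donne $g^{[d]}\circ f^{[d]}=(g\circ f)^{[d]}$), on a $r^{[d]}\circ s^{[d]}=\mathrm{id}_V$ dans $\A$, d'où $P_V$ facteur direct de $P_{A^N}$. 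Ensuite --- c'est le point crucial, le seul recours à l'hypothèse $n\ge d$ ---, je décomposerais l'identité $\mathrm{id}_{A^N}^{[d]}$ de $A^N$ dans $\A$ en la somme $\sum_{\underline a}\xi_{\underline a}$ de ses idempotents de poids : écrivant $\mathrm{id}_{A^N}=\sum_i e_i$ (idempotents de coordonnées) et développant par la règle d'addition des puissances divisées, $\xi_{\underline a}=\prod_i e_i^{[a_i]}$ pour $\underline a=(a_1,\dots,a_N)$ composition de $d$. Comme $\xi_{\underline a}$ n'est supporté que par les (au plus $d$) indices tels que $a_i>0$, il se factorise à travers un sous-objet de coordonnées isomorphe à un $A^r$, $r\le d\le n$, lui-même facteur direct de $A^n$, donc à travers $A^n$ : $\xi_{\underline a}=\iota_{\underline a}\circ p_{\underline a}$. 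L'égalité $\mathrm{id}_{A^N}^{[d]}=\sum_{\underline a}\iota_{\underline a}\circ p_{\underline a}$, via le foncteur contravariant $k$-linéaire $a\mapsto P_a$, exhibe alors $P_{A^N}$ comme facteur direct de $P_{A^n}^{\oplus m}$ dans $\C$.

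En combinant, chaque $P_V$ est facteur direct d'une somme finie de copies de $P_{A^n}$, donc $P=P_{A^n}$ engendre $\C$, ce qui conclut. La difficulté principale est la décomposition en idempotents de poids ci-dessus --- le mécanisme de Schur-Weyl, où sert exactement $n\ge d$ (pour $n<d$, $A^n$ ne voit pas les composantes multihomogènes de multidegré $(1,\dots,1)$ et ne saurait engendrer) ---, tout le reste (Yoneda, Morita, nature grothendieckienne de $\C$) étant formel et calqué sur le cas $A=B=k$ corps de \cite[th.~3.2]{FS}. Notons pour finir que, pour $n\ge n'\ge d$, les anneaux $\Gamma^d_k(\M_n(A))$ et $\Gamma^d_k(\M_{n'}(A))$ sont Morita-équivalents (anneaux d'endomorphismes de deux générateurs de $\C$), ce qui autorise à prendre comme objet d'évaluation l'objet minimal $A^d$.
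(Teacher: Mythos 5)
Votre démonstration est correcte et suit essentiellement la même voie que l'article, qui se contente d'invoquer le théorème de Gabriel-Popescu \guillemotleft\, de la même façon que dans le cas classique \guillemotright\, de Friedlander-Suslin \cite[th.~3.2]{FS} : lemme de Yoneda, identification du foncteur d'évaluation à $\mathrm{Hom}(P,-)$, et propriété de génération de $P$ via la décomposition exponentielle de $\Gamma^d$ (idempotents de poids), qui est précisément l'endroit où sert l'hypothèse $n\ge d$. Votre remarque finale sur l'équivalence de Morita entre les $\Gamma^d_k(\M_n(A))$ pour $n\ge d$ clarifie d'ailleurs utilement le léger décalage de notation ($A^d$ contre $\M_n$) dans l'énoncé.
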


\begin{rem}
Les algèbres $\Gamma^d_k(\M_n(A))$ sont des généralisations des algèbres de Schur classiques.

On peut également montrer que la catégorie $\pol_d(A,B)$ est équivalente à une catégorie de modules sur un certain anneau \cite[cor.~1.1]{Pira88}, mais cet anneau est peu maniable, de sorte que ce dernier résultat ne nous aidera pas dans le présent travail, contrairement à la proposition~\ref{pr-eqSchur}.
\end{rem}

On dispose d'un morphisme $k$-linéaire $k[V]\to\Gamma^d_k(V)\quad [v]\mapsto v^{[d]}$ naturel en le $k$-module $V$ et compatible aux structures monoïdales symétriques. On obtient ainsi un foncteur $k$-linéaire $k[\A]\to\Gamma^d_k(\A)$ qui induit par précomposition un foncteur $k$-linéaire
$$i_d : \Pp_{d;k}(\A,\E)=\mathbf{Add}_k(\Gamma^d_k(\A),\E)\to\mathbf{Add}_k(k[\A],\E)\simeq\fct(\A,\E)\;.$$
Celui-ci, comme tout foncteur de précomposition, commute aux limites et aux colimites (et est en particulier exact). Comme $k[\A]\to\Gamma^d_k(\A)$ est essentiellement surjectif, $i_d$ est également \emph{fidèle}. Son image essentielle est incluse dans $\pol_d(\A,\E)$. Si $d!$ est inversible dans $k$, alors $i_d$ est plein et son image essentielle est stable par sous-quotient, mais ce n'est pas vrai en général.

\begin{pr}\label{pr-PFtf} Soient $\A$ une catégorie additive essentiellement petite et $\E$ une catégorie de Grothendieck, toutes deux $k$-linéaires. Un foncteur $F$ de $\Pp_{d;k}(\A,\E)$ est de type fini si et seulement si le foncteur $i_d(F)$ de $\fct(\A,\E)$ est de type fini.
\end{pr}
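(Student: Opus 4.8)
The plan is to work with the $k$-linear functor $p : k[\A]\to\Gamma^d_k\A$ (the identity on objects, sending $[f]$ to $f^{[d]}$ on morphisms), so that $i_d=p^*$ is precomposition by $p$; recall that $p^*$ is exact, faithful, and commutes with all limits and colimits. The two implications are of a completely different nature. For ``$i_d(F)$ de type fini $\Rightarrow$ $F$ de type fini'' I would use only these formal properties. Being exact and faithful, $p^*$ reflects isomorphisms; being exact and commuting with filtered colimits, it sends a directed union of subfunctors $F=\col_\lambda F_\lambda$ to the directed union $p^*F=\col_\lambda p^*F_\lambda$ of the subfunctors $p^*F_\lambda\hookrightarrow p^*F$. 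If $p^*F$ is finitely generated, some inclusion $p^*F_\lambda\hookrightarrow p^*F$ is then an isomorphism, whence $F_\lambda=F$ since $p^*$ reflects isomorphisms. Thus every directed union exhausting $F$ is eventually equal to $F$, i.e. $F$ is finitely generated (using the standard reformulation, in terms of directed unions of subobjects, of the finite-generation property recalled in §\,\ref{sect-fct}).

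For the converse I would reduce everything to one computation. Writing $Q_a:=(\Gamma^d_k\A)(a,-)$ for the standard projective generators of $\Pp_{d;k}(\A;k)$, the finitely generated objects of $\Pp_{d;k}(\A,\E)$ are exactly the quotients of finite direct sums $\bigoplus_i Q_{a_i}\otimes E_i$ with $E_i$ finitely generated in $\E$ (the evident extension, to a Grothendieck target admitting a generating family of finitely generated objects, of the description recalled in §\,\ref{sect-fct}). Since $i_d=p^*$ is exact and commutes with finite direct sums, it is enough to show that $i_d(Q_a\otimes E)$, namely the functor $c\mapsto\Gamma^d_k(\A(a,c))\otimes E$, is finitely generated in $\fct(\A,\E)$ when $E$ is finitely generated; and since the general finitely generated $E$ follows by tensoring the resulting epimorphism by $E$, it suffices to treat $\E=k\Md$ and $E=k$, i.e. to prove that $\Gamma^d_k(\A(a,-))$ is finitely generated in $\F(\A;k)$.

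This last point is the heart of the matter, and the main obstacle. The difficulty is exactly the failure of surjectivity of $p$ on morphisms (the reason $i_d$ is not full): $\Gamma^d_k(M)$ is \emph{not} spanned by the divided powers $m^{[d]}$, so the subfunctor generated by $\operatorname{id}_a^{[d]}\in\Gamma^d_k(\A(a,a))$, whose value at $c$ is only the span of the $\phi^{[d]}$, $\phi\in\A(a,c)$, is too small. I would instead use that $\Gamma^d_k(M)$ \emph{is} spanned, as a $k$-module, by the products $m_1^{[n_1]}\cdots m_r^{[n_r]}$ over the compositions $(n_1,\dots,n_r)$ of $d$ and the tuples $(m_i)$ in $M$, and produce one generator per composition. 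For $\lambda=(n_1,\dots,n_r)$ a composition of $d$, let $\iota_1,\dots,\iota_r\colon a\to a^{\oplus r}$ be the canonical injections and set
\begin{equation*}
\xi_\lambda:=\iota_1^{[n_1]}\cdots\iota_r^{[n_r]}\in\Gamma^d_k\big(\A(a,a^{\oplus r})\big)=\Gamma^d_k(\A(a,-))(a^{\oplus r}).
\end{equation*}
A morphism $\psi\colon a^{\oplus r}\to c$ has components $\psi\iota_j=f_j\in\A(a,c)$ ranging over all $r$-tuples, and $\Gamma^d_k(\A(a,\psi))(\xi_\lambda)=f_1^{[n_1]}\cdots f_r^{[n_r]}$; hence the subfunctor generated by the \emph{finite} family $(\xi_\lambda)_\lambda$ contains at every $c$ all the spanning products, so equals $\Gamma^d_k(\A(a,-))$. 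As there are finitely many compositions of $d$, this gives an epimorphism $\bigoplus_\lambda k[\A(a^{\oplus r_\lambda},-)]\twoheadrightarrow\Gamma^d_k(\A(a,-))$ from a finite direct sum of representable (hence finitely generated) functors, which is the desired finite generation. Everything else is formal; the content is entirely in this passage to the biproducts $a^{\oplus r}$, which replaces the single, insufficient generator $\operatorname{id}_a^{[d]}$ by the finite family indexed by compositions of $d$.
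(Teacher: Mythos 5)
Your reverse implication ($i_d(F)$ of finite type $\Rightarrow F$ of finite type) is exactly the paper's argument (exactness, faithfulness and commutation with colimits of $i_d$), just spelled out via directed unions of subobjects; and the key computation of your forward implication is also exactly the paper's first step: the fact that $\Gamma^d_k(V)$ is spanned by the products $v_1^{[n_1]}\cdots v_r^{[n_r]}$ yields, one generator $\xi_\lambda$ over $a^{\oplus r_\lambda}$ per composition $\lambda$ of $d$, the epimorphism $\bigoplus_\lambda k[\A(a^{\oplus r_\lambda},-)]\twoheadrightarrow i_d\big((\Gamma^d_k\A)(a,-)\big)$, which is literally the surjection with which the paper's proof begins (and your diagnosis that the single element $\mathrm{id}_a^{[d]}$ generates too small a subfunctor is the right one).

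The gap is in how you exploit this surjection. You reduce to the claim that every finite-type object of $\Pp_{d;k}(\A,\E)$ is a quotient of a finite direct sum $\bigoplus_i Q_{a_i}\otimes E_i$ with $E_i$ of finite type in $\E$; as your own parenthesis concedes, this description is available only when $\E$ admits a generating family of finite-type objects. But the proposition is stated for an \emph{arbitrary} $k$-linear Grothendieck category $\E$, and a Grothendieck category need not be locally of finite type: in general, a finite-type object of $\Pp_{d;k}(\A,\E)$ is only a quotient of a finite sum $\bigoplus_i Q_{a_i}\otimes G_i$ with $G_i$ generators of $\E$ that need not be of finite type, and then $i_d(Q_a\otimes G_i)$ need not be of finite type either, so your reduction has no starting point. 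Thus your argument proves the proposition under the extra hypothesis that $\E$ is locally of finite type --- which covers $\E=k\Md$, hence every use made of the proposition in the paper, but not the statement as written. The paper avoids this by feeding the same surjection into the adjunction instead: applying the ($\E$-valued) Hom into $F$, it produces monomorphisms $j_d(F)(a)\hookrightarrow\bigoplus_\lambda F(a^{\oplus r_\lambda})$, natural in $F$, where $j_d$ is the right adjoint of $i_d$; since evaluations commute with filtered colimits, the canonical map $\col j_d\circ\Phi\to j_d\big(\col\Phi\big)$ is then a monomorphism for every filtered $\Phi$, and the composite $\col\mathrm{Hom}(i_dF,\Phi)\simeq\col\mathrm{Hom}(F,j_d\circ\Phi)\to\mathrm{Hom}(F,\col j_d\circ\Phi)\to\mathrm{Hom}(F,j_d(\col\Phi))\simeq\mathrm{Hom}(i_dF,\col\Phi)$ is injective as soon as $F$ is of finite type --- with no hypothesis on $\E$ beyond being Grothendieck. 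Your surjection is precisely the input this adjoint argument needs, so replacing your reduction-to-generators step by it repairs the gap and recovers the full statement.
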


\begin{proof}
Si $V$ est un $k$-module, le morphisme $k$-linéaire naturel $$\underset{i_1,\dots,i_r>0}{\bigoplus_{i_1+\dots+i_r=d}}k[V^r]\to\Gamma^d_k(V)\qquad [v_1,\dots,v_r]\mapsto v_1^{[i_1]}\dots v_r^{[i_r]}$$
est surjectif. On en déduit, pour tout objet $a$ de $\A$, une transformation naturelle surjective
$$\underset{i_1,\dots,i_r>0}{\bigoplus_{i_1+\dots+i_r=d}}k[\A](a^{\oplus r},-)\twoheadrightarrow i_d\big(\Gamma^d\A(a,-)\big)\,.$$
Il s'ensuit que l'adjoint à droite $j_d$ de $i_d$ (qui existe par la théorie générale des extensions de Kan) est muni pour tout objet $a$ de $\A$ de monomorphismes
$$j_d(F)(a)\hookrightarrow\underset{i_1,\dots,i_r>0}{\bigoplus_{i_1+\dots+i_r=d}}F(a^{\oplus r})$$
naturels en le foncteur $F$ de $\fct(\A,\E)$. Par conséquent, si $\Phi$ est un foncteur depuis une petite catégorie \emph{filtrante} $J$ vers $\fct(\A,\E)$, le morphisme canonique $\underset{J}{\col}j_d\circ\Phi\to j_d\Big(\underset{J}{\col}\Phi\Big)$ est un monomorphisme. Cela entraîne formellement que le foncteur $i_d$ préserve les objets de type fini.

La réciproque découle de ce que $i_d$ est exact et fidèle et commute aux colimites.
\end{proof}

On dispose d'un autre foncteur fondamental reliant les foncteurs polynomiaux stricts aux foncteurs (multiadditifs) ordinaires. Pour alléger les notations, nous ne l'explicitons que dans le cas de l'anneau de base des entiers, qui seul nous servira, en lien avec la proposition~\ref{pr-pira}.

Si $\A$ est une catégorie additive essentiellement petite et $\E$ une catégorie abélienne, la précomposition par le foncteur additif canonique $\Gamma^d_\mathbb{Z}(\A)\to\A^{\otimes d}$ (où la puissance tensorielle est prise sur $\mathbb{Z}$) fournit un foncteur exact $\mathbf{Add}_d(\A,\E)\simeq\mathbf{Add}(\A^{\otimes d},\E)\to\mathbf{Add}(\Gamma^d_\mathbb{Z}(\A),\E)=\Pp_{d;\mathbb{Z}}(\A,\E)$. La précomposition par le foncteur d'oubli $\Sigma\mathbf{Add}_d(\A,\E)\to\mathbf{Add}_d(\A,\E)$ fournit un foncteur
\begin{equation}\label{eq-Dt}
    \tilde{\Delta}_d : \Sigma\mathbf{Add}_d(\A,\E)\to\Pp_{d;\mathbb{Z}}(\A,\E)
\end{equation}
muni d'une action de $\Si_d$, qui relève le foncteur $\Delta_d : \Sigma\mathbf{Add}_d(\A,\E)\to\pol_d(\A,\E)$ introduit en \eqref{eq-D} à la fin de la section~\ref{sect-fct}, au sens où l'on dispose d'un isomorphisme $\Si_d$-équivariant $\Delta_d\simeq i_d\circ\tilde{\Delta}_d$.

\section{Noethérianité locale des foncteurs polynomiaux stricts}\label{snstr}

Le théorème principal de cette section repose sur le résultat classique suivant.

\begin{pr}[Noether]\label{pr-Noether} Supposons que $k$ est noethérien et que $A$ est une $k$-algèbre commutative de type fini, munie d'une action d'un groupe fini $G$ par automorphismes d'algèbre. Alors la $k$-algèbre $A^G$ des invariants est de type fini.

Si de plus $V$ est un $A$-module de type fini muni d'une action de $G$ par automorphismes $k$-linéaires tels que $g_*(a.v)=(g_*a).(g_*v)$ pour tout $(g,a,v)\in G\times A\times V$ (où l'étoile indique l'action de $G$ sur $A$ ou $V$ et le point l'action de $A$ sur $V$), alors $V^G$ est un $A^G$-module de type fini.
\end{pr}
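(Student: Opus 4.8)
Le plan est de ramener les deux assertions à la noethérianité d'une sous-$k$-algèbre de type fini bien choisie $B$ de $A^G$, sur laquelle $A$ devienne un module de type fini. Je choisirais d'abord des générateurs $a_1,\dots,a_n$ de la $k$-algèbre $A$. Pour chaque $i$, je considérerais le polynôme unitaire
$$P_i(X)=\prod_{g\in G}(X-g_*a_i)\,,$$
dont les coefficients, étant au signe près les fonctions symétriques élémentaires de la famille $(g_*a_i)_{g\in G}$ que $G$ permute, sont invariants sous $G$, donc appartiennent à $A^G$. Je noterais alors $B$ la sous-$k$-algèbre de $A^G$ engendrée par l'ensemble \emph{fini} de tous ces coefficients (pour $1\le i\le n$) : ainsi $B$ est une $k$-algèbre de type fini, et chaque $a_i$ est entier sur $B$ puisque $P_i(a_i)=0$.

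Comme $A$ est engendrée comme $B$-algèbre par les $a_i$, tous entiers sur $B$, le $B$-module $A$ est de type fini. Par le théorème de la base de Hilbert, $B$ est noethérien, de sorte que $A$ est un $B$-module noethérien. Pour la première assertion, $A^G$ est un sous-$B$-module de $A$, donc de type fini sur $B$ ; comme $B$ est elle-même de type fini sur $k$, il en résulte que $A^G$ est une $k$-algèbre de type fini.

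Pour la seconde assertion, je remarquerais d'abord que $A^G$, et \emph{a fortiori} $B$, préserve $V^G$ : si $a\in A^G$ et $v\in V^G$, la compatibilité de l'énoncé donne $g_*(a.v)=(g_*a).(g_*v)=a.v$ pour tout $g\in G$, donc $a.v\in V^G$. Puisque $V$ est de type fini sur $A$ et que $A$ est de type fini sur $B$, le $B$-module $V$ est de type fini, donc noethérien. Le sous-$B$-module $V^G$ est par conséquent de type fini sur $B$, et comme $B\subseteq A^G$, les mêmes générateurs l'engendrent sur $A^G$.

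Le point-clé, et essentiellement la seule subtilité de l'argument, est l'introduction de l'intermédiaire $B$ : on ne peut pas invoquer directement la noethérianité de $A^G$, précisément parce que la finitude de $A^G$ est ce que l'on cherche à établir. C'est le passage par une sous-algèbre de type fini \emph{explicite}, sur laquelle l'intégralité des générateurs assure que $A$ est un module de type fini, qui débloque la situation et permet d'appliquer la base de Hilbert.
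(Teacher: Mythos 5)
Votre démonstration est correcte, mais elle suit un chemin réellement différent de celui de l'article. Vous démontrez les deux assertions de façon autonome et uniforme, via la sous-$k$-algèbre auxiliaire $B\subseteq A^G$ engendrée par les coefficients des polynômes $\prod_{g\in G}(X-g_*a_i)$ : l'intégralité des générateurs $a_i$ sur $B$ fait de $A$, puis de $V$, des $B$-modules de type fini sur l'anneau noethérien $B$, et $A^G$ comme $V^G$ en héritent la finitude comme sous-$B$-modules. C'est essentiellement la démonstration classique d'E. Noether (celle de Bourbaki) pour la première assertion, que vous dépliez puis prolongez au cas des modules. L'article procède autrement : il cite la première assertion comme un théorème connu (Bourbaki, AC V, §1, th.~2) et en \emph{déduit} la seconde par une réduction formelle, en observant que $V$ est un $A$-module de type fini si et seulement si l'extension de carré nul $V\rtimes A$ est une $k$-algèbre commutative de type fini, et que $(V\rtimes A)^G\simeq V^G\rtimes A^G$ ; la seconde assertion résulte alors de la première appliquée à $V\rtimes A$. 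La réduction de l'article est plus courte et réutilise le théorème classique comme boîte noire ; votre argument est autosuffisant, élémentaire, et donne un peu plus au passage, à savoir que $A^G$ et $V^G$ sont finis sur la sous-algèbre explicite de type fini $B$, ce qui traite les deux énoncés d'un seul coup sans recourir à l'astuce de l'extension triviale.
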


\begin{proof} La première assertion est un théorème classique de Noether, cf. par exemple \cite[chap.~V, §\,1, th.~2]{Bki-AC2}. Pour établir la deuxième, on note d'abord que, si $A$ est une $k$-algèbre commutative de type fini et $V$ un $A$-module, alors $V$ est un $A$-module de type fini si et seulement si la $k$-algèbre commutative $V\rtimes A:=V\oplus A$ (où $V$ est un idéal de carré nul) est de type fini. Or, si $V$ est muni d'une action de $G$ vérifiant la condition de l'énoncé, on dispose d'un isomorphisme de $k$-algèbres $(V\rtimes A)^G\simeq V^G\rtimes A^G$, de sorte que la deuxième assertion s'obtient en appliquant la première à $V\rtimes A$.
\end{proof}

On rappelle qu'une $k$-algèbre commutative est dite \emph{essentiellement de type fini} si elle est isomorphe à la localisation relativement à une partie multiplicative d'une $k$-algèbre commutative de type fini (par exemple, un corps commutatif de type fini est une algèbre commutative essentiellement de type fini sur $\mathbb{Z}$). Une $k$-algèbre est dite \emph{finie} si le $k$-module sous-jacent est de type fini.

\begin{thm}\label{th-pstrnoeth}
Supposons que l'anneau $k$ est noethérien, que $A$ est une $k$-algèbre commutative essentiellement de type fini et que $L$ est une $A$-algèbre finie. Alors pour tout $d\in\mathbb{N}$, la catégorie de foncteurs polynomiaux stricts $\Pp_{d;k}(L,k)$ est localement noethérienne.
\end{thm}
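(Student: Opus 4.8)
Mon plan est de me ramener, via la proposition~\ref{pr-eqSchur} appliquée avec $B=k$ et $n=d$, à établir que l'anneau $\Gamma^d_k(\M_d(L))$ est noethérien à gauche : la catégorie $\Pp_{d;k}(L,k)$ est alors équivalente à $(\Gamma^d_k(\M_d(L)))\Md$, laquelle est localement noethérienne dès que l'anneau est noethérien à gauche (elle est engendrée par l'objet noethérien qu'est l'anneau lui-même). L'application structurale $A\to L$ étant à valeurs centrales, on peut, quitte à remplacer $A$ par son image dans le centre de $L$ (encore essentiellement de type fini sur $k$), supposer que $A$ est un sous-anneau commutatif central de $L$ avec $L$ de type fini comme $A$-module. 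Alors $R:=\M_d(L)\cong L^{d^2}$ est un $A$-module de type fini, et $A$ (plongé via les matrices scalaires) est central dans $R$.

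Tout repose sur deux propriétés de $\Gamma^d_k$ : (i) pour $A$ commutative de type fini sur $k$, l'algèbre commutative $\Gamma^d_k(A)$ est de type fini, donc noethérienne ; (ii) pour $M$ un $A$-module de type fini, $\Gamma^d_k(M)$ est un $\Gamma^d_k(A)$-module de type fini (la structure provenant de la structure monoïdale symétrique de $\Gamma^d_k$ appliquée à $A\otimes_k M\to M$). Grâce à une surjection $A$-linéaire $A^{\oplus p}\twoheadrightarrow M$ et à la surjectivité $\Gamma^d_k(A)$-linéaire de $\Gamma^d_k(A^{\oplus p})\to\Gamma^d_k(M)$ qu'elle induit (préservation des épimorphismes par $\Gamma^d_k$), l'assertion (ii) se ramène au cas $M=A^{\oplus p}$. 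Lorsque $A$ est \emph{plate} sur $k$, $A^{\oplus p}$ l'est aussi, et l'on dispose des identifications $\Gamma^d_k(A)\simeq(A^{\otimes d})^{\Si_d}$ et $\Gamma^d_k(A^{\oplus p})\simeq((A^{\oplus p})^{\otimes d})^{\Si_d}$, le groupe $\Si_d$ permutant les facteurs ; comme $A^{\otimes d}$ est de type fini et $(A^{\oplus p})^{\otimes d}\cong(A^{\otimes d})^{\oplus p^d}$ en est un module libre de type fini, la proposition~\ref{pr-Noether} appliquée à $\Si_d$ fournit (i) et (ii) dans le cas plat.

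Reste à éliminer l'hypothèse de platitude, qui est le c\oe ur de la preuve. On choisit une surjection de $k$-algèbres $P:=k[x_1,\dots,x_N]\twoheadrightarrow A$ (possible car $A$ est de type fini) ; l'algèbre $P$ étant libre, donc plate, le cas précédent s'applique à $P$. Comme $\Gamma^d_k$ préserve les épimorphismes, on obtient une surjection d'algèbres $\Gamma^d_k(P)\twoheadrightarrow\Gamma^d_k(A)$, d'où (i) pour $A$ à partir du cas de l'anneau de polynômes. De même, la surjection $P$-linéaire $P^{\oplus p}\twoheadrightarrow A^{\oplus p}$ induit une surjection $\Gamma^d_k(P)$-linéaire $\Gamma^d_k(P^{\oplus p})\twoheadrightarrow\Gamma^d_k(A^{\oplus p})$ ; le module de gauche étant de type fini sur $\Gamma^d_k(P)$, son image $\Gamma^d_k(A^{\oplus p})$ est de type fini sur $\Gamma^d_k(A)$, ce qui achève (ii).

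Appliquant (ii) au $A$-module $R=\M_d(L)$, on voit que $\Gamma^d_k(R)$ est de type fini sur $\Gamma^d_k(A)$. La symétrie de la structure monoïdale de $\Gamma^d_k$ et la centralité de $A$ dans $R$ entraînent que l'image de $\Gamma^d_k(A)$ est \emph{centrale} dans $\Gamma^d_k(R)$ ; $\Gamma^d_k(A)$ étant noethérien par (i), toute suite croissante d'idéaux à gauche de $\Gamma^d_k(R)$ est une suite de sous-$\Gamma^d_k(A)$-modules du module noethérien $\Gamma^d_k(R)$, donc stationne, et $\Gamma^d_k(R)$ est noethérien. Enfin, si $A$ est seulement essentiellement de type fini, $A=T^{-1}A_0$ avec $A_0$ de type fini, on écrit $A=\col_{t\in T}A_0[1/t]$ : comme $\Gamma^d_k$ commute aux colimites filtrantes et transforme une localisation en localisation (ce que l'on vérifie sur la propriété universelle des puissances divisées), $\Gamma^d_k(A)$ est une localisation de la $k$-algèbre noethérienne $\Gamma^d_k(A_0)$, donc noethérienne, et (ii) se préserve par localisation. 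L'obstacle principal est bien l'absence de platitude de $L$ sur $k$, contournée par la préservation des épimorphismes par $\Gamma^d_k$ combinée au théorème de finitude de Noether (proposition~\ref{pr-Noether}).
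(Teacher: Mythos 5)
Votre démonstration est correcte et mobilise les mêmes ingrédients de fond que celle de l'article : réduction, via la proposition~\ref{pr-eqSchur}, à la noethérianité à gauche de l'algèbre $\Gamma^d_k(\M_d(L))$ ; théorème de Noether (proposition~\ref{pr-Noether}) pour les invariants de $\Si_d$ dans le cas d'une base plate, où $\Gamma^d_k\simeq((-)^{\otimes d})^{\Si_d}$ ; préservation des épimorphismes par $\Gamma^d_k$ ; commutation de $\Gamma^d_k$ aux localisations centrales via $t\mapsto t^{[d]}$. La différence tient à l'organisation des réductions. L'article n'établit jamais vos énoncés (i) et (ii) pour une base non plate : il remplace la paire $(A,L)$ par $(k[x_1,\dots,x_n],R)$, où $R$ est la sous-$k[x_1,\dots,x_n]$-algèbre de $L$ engendrée par une partie finie engendrant $L$ comme $A$-module ; $R$ est alors finie sur l'anneau de polynômes, qui est plat, de sorte que le cas plat s'applique tel quel, et $L\simeq R[T^{-1}]$ donne la conclusion par une unique localisation $\Gamma^d(\M_d(L))\simeq\Gamma^d(\M_d(R))[(T^{[d]})^{-1}]$. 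Vous procédez au contraire en deux temps : extension de (i) et (ii) à toute base commutative de type fini, non nécessairement plate, en poussant le cas plat le long de la surjection $P=k[x_1,\dots,x_N]\twoheadrightarrow A$ (ce qui utilise que $\Gamma^d_k$, monoïdal et préservant les épimorphismes, transforme cette surjection en une surjection d'algèbres $\Gamma^d_k(P)\twoheadrightarrow\Gamma^d_k(A)$), puis traitement du cas essentiellement de type fini par localisation. Votre organisation a l'avantage de dégager des lemmes intermédiaires plus généraux, valables sans hypothèse de platitude ; celle de l'article est plus économique : elle n'utilise l'isomorphisme de localisation qu'une seule fois, pour l'algèbre de matrices, et évite d'avoir à vérifier que (ii) passe à la localisation, point que vous affirmez un peu rapidement. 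Pour le compléter, il faut, pour $M$ de type fini sur $A=A_0[T^{-1}]$, choisir un sous-$A_0$-module de type fini $M_0$ de $M$ tel que $M_0[T^{-1}]=M$, puis vérifier que $\Gamma^d_k$ envoie la multiplication par $t$ sur la multiplication par $t^{[d]}$, d'où $\Gamma^d_k(M)\simeq\Gamma^d_k(M_0)[(T^{[d]})^{-1}]$ comme module sur $\Gamma^d_k(A)\simeq\Gamma^d_k(A_0)[(T^{[d]})^{-1}]$ ; cette vérification, de même nature que les isomorphismes canoniques invoqués sans démonstration dans l'article, ne présente pas de difficulté.
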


\begin{proof} En vertu de la proposition~\ref{pr-eqSchur}, il s'agit de montrer que $\Gamma^d(\M_d(L))$ est une algèbre noethérienne à gauche.

On commence par le faire dans le cas où $A$ est une $k$-algèbre de type fini et plate. Dans ce cas $\Gamma^d(A)\simeq (A^{\otimes d})^{\Si_d}$ est une algèbre de type fini, et $(V^{\otimes d})^{\Si_d}$ est un $\Gamma^d(A)$-module de type fini pour tout $A$-module de type fini $V$, par la proposition~\ref{pr-Noether}. Il s'ensuit que $\Gamma^d(V)$ est également un $\Gamma^d(A)$-module de type fini, car tout $A$-module de type fini est quotient d'un $A$-module libre de type fini, qui est $k$-plat, et $\Gamma^d$ préserve les épimorphismes. Or $\M_d(L)$ est un $A$-module de type fini, donc $\Gamma^d(\M_d(L))$ est une algèbre finie sur l'algèbre commutative de type fini $\Gamma^d(A)$, c'est donc une algèbre noethérienne (à gauche et à droite).

Il existe par hypothèse $n\in\mathbb{N}$, un idéal $I$ de l'algèbre de polynômes $k[x_1,\dots,x_n]$ et une partie multiplicative $S$ de l'algèbre quotient $k[x_1,\dots,x_n]/I$ tels que $A\simeq\big(k[x_1,\dots,x_n]/I)[S^{-1}]$. L'image inverse $\tilde{S}$ de $S$ par la projection $k[x_1,\dots,x_n]\twoheadrightarrow k[x_1,\dots,x_n]/I$ est une partie multiplicative de $k[x_1,\dots,x_n]$, et l'on a $A\simeq\big(k[x_1,\dots,x_n][\tilde{S}^{-1}]\big)/I[\tilde{S}^{-1}]$. Supposons que $L$ est engendré comme $A$-module par une partie finie $E$. Soient $R$ la sous-$k[x_1,\dots,x_n]$-algèbre de $L$ engendrée par $E$ (où l'on voit $L$ comme $k[x_1,\dots,x_n]$-algèbre via le morphisme $k[x_1,\dots,x_n]\to A$) et $T$ l'image de $\tilde{S}$ dans $R$ : $T$ est une partie multiplicative \emph{centrale} de $R$, et $L\simeq R[T^{-1}]$.

Comme $k[x_1,\dots,x_n]$ est une $k$-algèbre commutative de type fini plate et $R$ une $k[x_1,\dots,x_n]$-algèbre finie, le début de la démonstration montre que $\Gamma^d(\M_d(R))$ est une algèbre noethérienne. La conclusion provient alors des isomorphismes canoniques
$$\Gamma^d(\M_d(L))\simeq\Gamma^d(\M_d(R)[T^{-1}])\simeq\Gamma^d(\M_d(R))[(T^{[d]})^{-1}],$$
où $T^{[d]}$ désigne la partie multiplicative centrale $\{t^{[d]}\,\vert\,t\in T\}$ de $\Gamma^d(R)$.
\end{proof}

\begin{rem} Plus généralement, si $k$, $A$ et $L$ vérifient les hypothèses du théorème~\ref{th-pstrnoeth}, alors la catégorie $\Pp_{d;k}(\mathbf{P}(A),\E)$ est localement noethérienne pour toute catégorie de Grothendieck $k$-linéaire localement noethérienne $\E$.
\end{rem}

\section{La propriété $pf_\infty$}\label{snpfi}

Pour $n\in\mathbb{N}$, on dit qu'un objet $X$ d'une catégorie abélienne $\E$ engendrée par des objets projectifs de type fini vérifie la propriété de $n$-présentation finie --- en abrégé $pf_n$ --- s'il existe une suite exacte $P_n\to P_{n-1}\to\dots\to P_0\to X\to 0$ où les $P_i$ sont projectifs de type fini. On dit qu'un objet vérifie la propriété $pf_\infty$ s'il satisfait $pf_n$ pour tout $n\in\mathbb{N}$. On renvoie par exemple à \cite[§\,1]{DT-schw} pour plus de détails sur la propriété $pf_n$ ; on note $\pf_n(\E)$ la classe des objets de $\E$ la vérifiant.

Avant de donner le résultat principal de cette section, nous aurons besoin d'un lemme général qui permettra de contourner des problèmes posés par le comportement de la propriété $pf_n$ par changement de base \emph{non plat} en présence de propriétés de noethérianité.

\begin{lm}\label{lm-pfi_chgb} Supposons que $k$ est noethérien. Si $\C$ est une petite catégorie préadditive et $F$ un foncteur noethérien de $\mathbf{Add}(\C;\mathbb{Z})$, alors $\mathrm{Tor}_1^\mathbb{Z}(F,k)$ est un foncteur de type fini de $\mathbf{Add}(\C;k)$.
\end{lm}

\begin{proof}
Comme l'anneau $k$ est noethérien, l'idéal de ses éléments de $\mathbb{Z}$-torsion est de type fini, donc annulé par un entier $N>0$. Il s'ensuit que le morphisme naturel
$$\bigoplus_n\mathrm{Hom}_\mathbb{Z}(\mathbb{Z}/n,k)\otimes_\mathbb{Z}\mathrm{Tor}_1^\mathbb{Z}(F,\mathbb{Z}/n)\to\mathrm{Tor}_1^\mathbb{Z}(F,k)$$
de $\mathbf{Add}(\C;k)$ déduit de la fonctorialité de $\mathrm{Tor}_1^\mathbb{Z}$, où la somme directe est prise sur l'ensemble fini des diviseurs strictement positifs de $N$, est un épimorphisme. La conclusion provient donc de ce que $\mathrm{Hom}_\mathbb{Z}(\mathbb{Z}/n,k)\hookrightarrow k$ est un $k$-module de type fini et $\mathrm{Tor}_1^\mathbb{Z}(F,\mathbb{Z}/n)\simeq\mathrm{Hom}_\mathbb{Z}(\mathbb{Z}/n,F)\hookrightarrow F$ est un foncteur de type fini de $\mathbf{Add}(\C;\mathbb{Z})$, puisque $k$ et $F$ sont noethériens.
\end{proof}

\begin{thm}\label{th-pfiP} Soient $d\in\mathbb{N}$ et $\A$ une petite catégorie additive. On suppose que l'anneau $k$ est noethérien et que les catégories $\Pp_{d;\mathbb{Z}}(\A;k)$ et $\mathbf{Add}(\A;\mathbb{Z})$ sont localement noethériennes. 

Si $F$ est un foncteur de type fini de $\Pp_{d;\mathbb{Z}}(\A;k)$, alors $i_d(F)$ appartient à $\pf_\infty(\F(\A;k))$.
\end{thm}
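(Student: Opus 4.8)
The plan is to reduce the statement to the standard projective generators of $\Pp_{d;\mathbb{Z}}(\A;k)$, to treat those over the base ring $\mathbb{Z}$ first, and then to descend to an arbitrary noetherian $k$ by a base-change argument whose finiteness is supplied by Lemma~\ref{lm-pfi_chgb}. Concretely: since $\Pp_{d;\mathbb{Z}}(\A;k)$ is locally noetherian and $F$ is finitely generated, $F$ is noetherian and hence admits a resolution $\cdots\to Q_1\to Q_0\to F\to 0$ by finitely generated projectives. The functor $i_d$ is exact, being a precomposition functor (see the discussion preceding Proposition~\ref{pr-PFtf}), so it yields an exact complex $\cdots\to i_d(Q_1)\to i_d(Q_0)\to i_d(F)\to 0$ in $\F(\A;k)$. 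Using the stability properties of the classes $\pf_n$ recalled in \cite[§\,1]{DT-schw} — closure under extensions, the syzygy-shifting relations, stability under retracts, and the fact that an object resolved by objects of $\pf_\infty$ again lies in $\pf_\infty$ — it suffices to prove $i_d(Q)\in\pf_\infty(\F(\A;k))$ for $Q$ a finitely generated projective; as these are retracts of finite sums of the standard generators and $i_d$ preserves finite sums and retracts, it suffices to treat $Q=\Gamma^d_\mathbb{Z}(\A(a,-))\otimes_\mathbb{Z} k$, whose image $i_d(Q)$ is the functor $b\mapsto\Gamma^d_\mathbb{Z}(\A(a,b))\otimes_\mathbb{Z} k$.

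Next I would settle the case $k=\mathbb{Z}$, namely $\Gamma^d_\mathbb{Z}(\A(a,-))\in\pf_\infty(\F(\A;\mathbb{Z}))$. The surjection constructed in the proof of Proposition~\ref{pr-PFtf} already presents this functor as a quotient of a finite sum of standard projectives $\mathbb{Z}[\A(a^{\oplus r},-)]$, so it is finitely generated; the difficulty is that $\F(\A;\mathbb{Z})$ is \emph{not} locally noetherian, so the finite generation of the successive syzygies is not automatic. To obtain it I would feed the noetherianity hypotheses through Pirashvili's recollement (Proposition~\ref{pr-pira}): the exact left adjoint $\mathrm{Cr}_d$ sends our finitely generated functor to a finitely generated, hence noetherian, object of $\Sigma\mathbf{Add}_d(\A;\mathbb{Z})$ — noetherian because $\mathbf{Add}(\A;\mathbb{Z})$ is locally noetherian, whence so are the multiadditive categories $\mathbf{Add}(\A^{\otimes_\mathbb{Z} d};\mathbb{Z})$, and because the symmetry is merely an action of the finite group $\Si_d$, to which Proposition~\ref{pr-Noether} applies. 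The unit exact sequences of Proposition~\ref{pr-pira}, whose kernel and cokernel have degree $<d$, then relate $\Gamma^d_\mathbb{Z}(\A(a,-))$ to $\Delta_d^{\Si_d}\mathrm{Cr}_d$ of a noetherian multiadditive functor, up to functors of degree $<d$ that are governed inductively. Since $\Delta_d^{\Si_d}\mathrm{Cr}_d$ stays in degree $d$, this step is genuinely self-referential, which is precisely what forces the statement to be established by an induction on the polynomial degree intertwined with the local-noetherianity theorem, exactly as announced in the introduction.

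Finally I would descend to a general noetherian $k$. Tensoring a finite-type projective resolution of $\Gamma^d_\mathbb{Z}(\A(a,-))$ over $\mathbb{Z}$ with $k$ yields a complex of finitely generated projectives of $\F(\A;k)$ whose homology is $\mathrm{Tor}_\ast^\mathbb{Z}(\Gamma^d_\mathbb{Z}(\A(a,-)),k)$; because $k$ is not flat over $\mathbb{Z}$, this need not be a resolution, and the task is to show these torsion functors are finitely generated so that the deviation can itself be resolved and absorbed into a finite-type resolution. This is precisely the purpose of Lemma~\ref{lm-pfi_chgb}, applied to the multiadditive (cross-effect) description of the torsion together with the noetherianity of $k$. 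I expect this non-flat base change — reconciling the $\mathbb{Z}$-torsion with the finiteness demanded by $\pf_\infty$ — to be the principal obstacle, the finite generation of the $\mathbb{Z}$-syzygies through the noetherianity of $\mathbf{Add}(\A;\mathbb{Z})$ being the second delicate point and the reason the argument must proceed by induction on the degree.
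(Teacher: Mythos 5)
Your first paragraph (reduction to the standard generators $\Gamma^d_\mathbb{Z}(\A(a,-))\otimes_\mathbb{Z}k$) and your last paragraph (the non-flat base change controlled by Lemma~\ref{lm-pfi_chgb}) are sound in spirit and correspond to the outer layers of the paper's proof, with one structural caveat: you cannot first settle the $\mathbb{Z}$-case to infinite length and then base-change once, because absorbing the homology $\mathrm{Tor}_1^\mathbb{Z}(\Gamma^d_\mathbb{Z}(\A(a,-)),k)$ of the tensored resolution requires knowing that $i_d$ of an arbitrary \emph{finitely generated} object of $\Pp_{d;\mathbb{Z}}(\A;k)$ (not a projective one) already lies in $\pf_n$ --- which is the statement being proved. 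This is why the paper runs the interleaved bootstrap $(H^1_n)\Rightarrow(H^2_n)\Rightarrow(H^1_{n+1})$ on the resolution length $n$; your linear ``resolve and absorb'' formulation hides a self-reference, but that one is repairable.

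The genuine gap is your middle step, which is the heart of the theorem: proving $\Gamma^d_\mathbb{Z}(\A(a,-))\in\pf_\infty(\F(\A;\mathbb{Z}))$. Your route through Pirashvili's recollement is circular in a way that no induction on the polynomial degree can fix: the unit of Proposition~\ref{pr-pira} compares the generator with $\Delta_d^{\Si_d}\mathrm{Cr}_d(F)\simeq i_d\big(\tilde{\Delta}_d^{\Si_d}\mathrm{Cr}_d(F)\big)$, which is again the image under $i_d$ of a finitely generated strict polynomial functor \emph{of the same degree $d$}; so the object you must control in order to conclude is an instance of the very statement you are proving, at degree $d$, not at degree $<d$. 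The ``intertwined induction'' announced in the introduction is the proof scheme of Theorems~\ref{th-princ} and~\ref{th-noethbis}, which \emph{consume} Theorem~\ref{th-pfiP}; Theorem~\ref{th-pfiP} itself is proved in the paper with no induction on $d$ and no recollement, precisely to break this circle. (A second unjustified claim in the same paragraph: local noetherianity of $\mathbf{Add}(\A;\mathbb{Z})$ does not formally imply that of $\mathbf{Add}(\A^{\otimes_\mathbb{Z}d};\mathbb{Z})$ --- this is essentially the question of noetherianity of tensor products of noetherian rings, which the paper's final remark flags as notoriously difficult.) What is missing, and what the paper supplies as an external non-inductive input, is this: since $\A(a,-)$ is noetherian in $\mathbf{Add}(\A;\mathbb{Z})$, its $\mathbb{Z}$-torsion is killed by some $N>0$; the category $\mathbf{Ab}_N$ of finitely generated abelian groups with torsion killed by $N$ is equivalent to $\mathbf{P}(\mathrm{A}_N)$ for a ring $\mathrm{A}_N$ whose additive group is finitely generated, so that \cite[cor.~6.18]{DT-schw} applies to the restriction of $\Gamma^d_\mathbb{Z}$ to $\mathbf{Ab}_N$; one then transports this along $\A(a,-)$, using commutation with filtered colimits and resolutions whose terms are functors $\mathbb{Z}[A]$ with $A$ finitely generated additive, which lie in $\pf_\infty$ by \cite[prop.~7.1]{DT-schw}. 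Without this reduction to the finitely generated additive situation (or a substitute for it), your argument never gets off the ground over $\mathbb{Z}$.
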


\begin{proof}
Soit $a$ un objet de $\A$. Comme le foncteur $\A(a,-)$ est de type fini dans la catégorie localement noethérienne $\mathbf{Add}(\A;\mathbb{Z})$, son sous-foncteur de $\mathbb{Z}$-torsion est de type fini, de sorte qu'il est annulé par un entier $N>0$.

Notons $\mathbf{Ab}_N$ la sous-catégorie pleine de $\mathbf{Ab}$ constituée des groupes abéliens de type fini dont le sous-groupe de torsion est annulé par $N$. Cette catégorie est équivalente à $\mathbf{P}(\mathrm{A}_N)$, où $\mathrm{A}_N$ désigne l'anneau des endomorphismes du groupe abélien $\mathbb{Z}\oplus\bigoplus\mathbb{Z}/n$ où la somme directe est prise sur les diviseurs strictement positifs $n$ de $N$. Comme le groupe abélien sous-jacent à $\mathrm{A}_N$ est de type fini, \cite[cor.~6.18]{DT-schw} et la proposition~\ref{pr-PFtf} montrent que la restriction à $\mathbf{Ab}_N$ de $\Gamma^d_\mathbb{Z}$ appartient à $\pf_\infty(\F(\mathbf{Ab}_N;\mathbb{Z}))$.

Comme les endofoncteurs $\mathbb{Z}[-]$ et $\Gamma^d_\mathbb{Z}$ de $\mathbf{Ab}$ commutent aux colimites filtrantes, on en déduit que la restriction de $\Gamma^d_\mathbb{Z}$ aux groupes abéliens dont le sous-groupe de torsion est annulé par $N$ possède une résolution dont chaque terme est une somme directe finie de foncteurs de la forme $\mathbb{Z}[\mathbf{Ab}(V,-)]$, où $V$ appartient à $\mathbf{Ab}_N$. Il s'ensuit que $\Gamma^d_\mathbb{Z}(\A(a,-))$ possède une résolution dont chaque terme est une somme directe finie de foncteurs de la forme $\mathbb{Z}[A]$, où $A$ est un foncteur de type fini, donc vérifiant la propriété $pf_\infty$, de la catégorie localement noethérienne $\mathbf{Add}(\A;\mathbb{Z})$. Un tel foncteur $\mathbb{Z}[A]$ appartient à $\pf_\infty(\F(\A;\mathbb{Z}))$ en vertu de \cite[prop.~7.1]{DT-schw}. Par conséquent, $\Gamma^d_\mathbb{Z}(\A(a,-))$ appartient à $\pf_\infty(\F(\A;\mathbb{Z}))$. On conclut 
en montrant les propriétés $(H^1_n)$ et $(H^2_n)$ ci-dessous\\
$(H^1_n)\qquad\forall a\in\mathrm{Ob}\,\A\qquad\Gamma^d_\mathbb{Z}(\A(a,-))\otimes_\mathbb{Z}k\in\pf_n(\F(\A;k))$\\
$(H^2_n)\qquad$ pour tout $F\in\mathrm{Ob}\,\Pp_{d;\mathbb{Z}}(\A;k)$ de type fini, $i_d(F)\in\pf_n(\F(\A;k))$\\
selon le schéma de récurrence sur $n\in\mathbb{N}$ suivant :
$$(H^1_n)\Rightarrow (H^2_n)\Rightarrow (H^1_{n+1}).$$
L'initialisation $(H^1_0)$ est triviale, l'implication $(H^1_n)\Rightarrow (H^2_n)$ est formelle (cf. \cite[prop.~1.7]{DT-schw}), et l'implication $(H^2_n)\Rightarrow (H^1_{n+1})$ s'établit au moyen du lemme~\ref{lm-pfi_chgb} (appliqué à la catégorie préadditive $\C=\Gamma^d_\mathbb{Z}(\A)$), en utilisant \cite[cor.~1.6]{DT-schw} (et la proposition~\ref{pr-PFtf}), qu'on applique à la tensorisation par $k$ sur $\mathbb{Z}$ d'une résolution projective de type fini (qui est nécessairement $\mathbb{Z}$-libre) de $\Gamma^d_\mathbb{Z}(\A(a,-))$ dans $\F(\A;\mathbb{Z})$. Cela termine la démonstration.
\end{proof}

%
\section{Noethérianité locale des foncteurs polynomiaux ordinaires}\label{snprinc}

\begin{nota}
Si $A$ est un anneau, on désigne par $A_k$ la $k$-algèbre $A\otimes_\mathbb{Z}k$.
\end{nota}

\subsection{Premier théorème de finitude}

Le théorème suivant constitue une vaste généralisation de \cite[cor.~6.18]{DT-schw}. Sa démonstration utilise les foncteurs $\Delta_d$ et $\tilde{\Delta}_d$ introduits en \eqref{eq-D} et \eqref{eq-Dt} respectivement.

\begin{thm}\label{th-princ} Soit $A$ un anneau noethérien à gauche. On suppose que $k$ est noethérien et qu'il existe une $k$-algèbre commutative essentiellement de type fini $L$ telle que $A_k$ soit une $L$-algèbre finie. Alors tout foncteur polynomial de type fini $F$ de $\F(A,k)$ est noethérien et appartient à $\pf_\infty(\F(A,k))$.
\end{thm}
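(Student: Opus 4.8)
The plan is to argue by induction on the polynomial degree $d$ of $F$, establishing noetherianity and the $\pf_\infty$-property \emph{simultaneously}; the base case $d=0$ is elementary, since a finitely generated functor of degree $0$ is the constant functor at a finitely generated $k$-module, which is noetherian and $\pf_\infty$ because $k$ is noetherian. Before running the induction I would record the local noetherianity of the auxiliary categories. Writing $\A=\mathbf{P}(A)$, one has $\mathbf{Add}(\A;\mathbb{Z})\simeq A\Md$, locally noetherian since $A$ is left noetherian. Next, Proposition~\ref{pr-eqSchur} over $\mathbb{Z}$, the divided-power base change $\Gamma^d_{\mathbb{Z}}(\M_n(A))\otimes_{\mathbb{Z}}k\simeq\Gamma^d_k(\M_n(A_k))$ and Proposition~\ref{pr-eqSchur} over $k$ give $\Pp_{d;\mathbb{Z}}(\A;k)\simeq\Pp_{d;k}(A_k,k)$; as $A_k$ is a finite $L$-algebra with $L$ commutative essentially of finite type over the noetherian ring $k$, Theorem~\ref{th-pstrnoeth} makes this category locally noetherian. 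Likewise $\mathbf{Add}_d(\A;k)$ is equivalent to a category of modules over $(A_k)^{\otimes_k d}$, a finite algebra over the noetherian ring $L^{\otimes_k d}$, hence locally noetherian; and since $\Si_d$ is finite, the category $\Sigma\mathbf{Add}_d(\A;k)$ of equivariant objects is locally noetherian as well. In particular the hypotheses of Theorem~\ref{th-pfiP} hold.

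For the inductive step, let $F$ be finitely generated of degree $d$ and use the Pirashvili exact sequence of Proposition~\ref{pr-pira}
$$0\to K\to F\xrightarrow{\ \eta\ }\Delta_d^{\Si_d}\mathrm{Cr}_d(F)\to Q\to 0,\qquad K,Q\in\pol_{d-1}(A,k).$$
I would first prove that $\Delta_d^{\Si_d}\mathrm{Cr}_d(F)$ is finitely generated and $\pf_\infty$. Since $\mathrm{Cr}_d$ is left adjoint to $\Delta_d^{\Si_d}$, and the latter preserves filtered colimits (invariants under the finite group $\Si_d$ commute with them), $M:=\mathrm{Cr}_d(F)$ is finitely generated. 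Moreover $\Delta_d\mathrm{Cr}_d(F)$ is the top cross-effect summand of $a\mapsto F(a^{\oplus d})$, hence finitely generated in $\F(A,k)$, because precomposition by $a\mapsto a^{\oplus d}$ carries the generator $k[\A(P,-)]$ to $k[\A(P^{\oplus d},-)]$. By $\Delta_d\simeq i_d\circ\tilde\Delta_d$ and Proposition~\ref{pr-PFtf}, the object $\tilde\Delta_d(M)$ is then finitely generated in $\Pp_{d;\mathbb{Z}}(\A;k)$, hence noetherian; therefore its subobject $G:=\tilde\Delta_d(M)^{\Si_d}$ is finitely generated, and since $i_d$ is exact and commutes with invariants one gets $i_d(G)=\Delta_d^{\Si_d}\mathrm{Cr}_d(F)$. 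Proposition~\ref{pr-PFtf} shows $i_d(G)$ is finitely generated and Theorem~\ref{th-pfiP} shows $i_d(G)\in\pf_\infty(\F(A,k))$.

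I would then control the two tails. As $Q$ is a quotient of $i_d(G)$ it is finitely generated, so by induction it is noetherian and lies in $\pf_\infty(\F(A,k))$; the image $I:=\operatorname{im}\eta$ sits in $0\to I\to i_d(G)\to Q\to 0$ with both outer terms in $\pf_\infty(\F(A,k))$, whence $I\in\pf_\infty(\F(A,k))$, and in particular $I$ is finitely presented. The crucial point is then that $K=\ker\eta$, being the kernel of the epimorphism $F\twoheadrightarrow I$ from the finitely generated $F$ onto the finitely presented $I$, is finitely generated; lying in $\pol_{d-1}(A,k)$, it is noetherian and $\pf_\infty$ by induction. Thus $F$, an extension of $I$ by $K$, belongs to $\pf_\infty(\F(A,k))$. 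For noetherianity, given an increasing chain $(F_j)$ of subfunctors of $F$, exactness of $\mathrm{Cr}_d$ and noetherianity of $M$ make $(\mathrm{Cr}_d F_j)$ stationary from some index $N$ on; for $j\ge N$ the quotient $F_j/F_N$ lies in $\pol_{d-1}(A,k)$, hence inside the largest $\pol_{d-1}$-subobject of the finitely generated functor $F/F_N$, which by the crucial point is finitely generated, hence noetherian. The chain therefore stabilizes, and $F$ is noetherian.

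The main obstacle is precisely this finiteness of $K$: noetherianity cannot be obtained degree by degree in isolation, for the $\pol_{d-1}$-torsion $K=\ker\eta$ of a finitely generated functor is accessible only through the finite presentation of $I=\operatorname{im}\eta$, which itself rests on the $\pf_\infty$-property of $i_d(G)$ (Theorem~\ref{th-pfiP}) and of $Q$ (induction). This is exactly why the two conclusions must be proved together, and it is here that the exactness of $\mathrm{Cr}_d$ and $\Delta_d$, together with the finiteness of $\Si_d$ permitting the passage to invariants inside the \emph{noetherian} category $\Pp_{d;\mathbb{Z}}(\A;k)$, are used in an essential way.
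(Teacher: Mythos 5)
Your proof is correct, and its core is the same as the paper's: the same simultaneous induction on the degree $d$, the same use of Pirashvili's sequence (Proposition~\ref{pr-pira}), the same chain of deductions --- $\Delta_d\mathrm{Cr}_d(F)$ finitely generated in $\F(A,k)$, hence $\tilde{\Delta}_d\mathrm{Cr}_d(F)$ finitely generated in $\Pp_{d;\mathbb{Z}}(\mathbf{P}(A);k)\simeq\Pp_{d;k}(A_k,k)$ by Proposition~\ref{pr-PFtf}, hence its $\Si_d$-invariants finitely generated by Theorem~\ref{th-pstrnoeth}, hence $\Delta_d^{\Si_d}\mathrm{Cr}_d(F)\in\pf_\infty$ by Theorem~\ref{th-pfiP} --- and the same formal $\pf_n$-bookkeeping to get the kernel $K$ finitely generated and $F\in\pf_\infty(\F(A,k))$. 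The one genuine divergence is the concluding noetherianity argument. The paper deduces it directly from what you have already proved, with no auxiliary category: for any subfunctor $G\subseteq F$, the quotient $F/G$ is finitely generated of degree at most $d$, hence finitely presented in $\F(A,k)$ by the $\pf_\infty$ statement applied to it, which forces $G$ to be finitely generated; thus $F$ is noetherian. You instead run a chain argument: stabilization of $(\mathrm{Cr}_d F_j)$ in $\Sigma\mathbf{Add}_d(\mathbf{P}(A),k\Md)$, then stabilization of $(F_j/F_N)$ inside the largest $\pol_{d-1}$-subobject of $F/F_N$ (which is indeed $\ker\eta_{F/F_N}$, since by naturality of the unit every $\pol_{d-1}$-subobject is killed by $\eta$; this identification, which your appeal to the ``crucial point'' requires, deserves a line). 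This works, but it costs you an extra verification the paper never needs: local noetherianity of $\Sigma\mathbf{Add}_d$, i.e.\ noetherianity of $(A_k)^{\otimes_k d}$ (and of the associated skew group algebra), which you assert via noetherianity of $L^{\otimes_k d}$. That claim is true --- essential finite type over $k$ is preserved by $\otimes_k$, unlike noetherianity itself, whose non-preservation by tensor products the paper explicitly flags as a difficulty --- but it should be justified rather than asserted. The paper's one-line deduction is both shorter and avoids this issue entirely.
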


\begin{proof} On montre le résultat par récurrence sur le degré polynomial $d$ de $F$. Le résultat est trivial pour $d\le 0$, on peut donc supposer $d>0$ et l'assertion établie pour les foncteurs de $\pol_{d-1}(A,k)$. Comme $F$ est de type fini, le foncteur $\Delta_d\mathrm{Cr}_d(F)$ est également de type fini dans $\F(A,k)$ (ou $\pol_d(A,k)$). Il s'ensuit, grâce à la proposition~\ref{pr-PFtf}, que le foncteur $\tilde{\Delta}_d\mathrm{Cr}_d(F)$ est de type fini dans $\Pp_{d;\mathbb{Z}}(A,k)\simeq\Pp_{d;k}(A_k,k)$. Par le théorème~\ref{th-pstrnoeth}, cela entraîne que son sous-foncteur $\tilde{\Delta}_d^{\Si_d}\mathrm{Cr}_d(F)$ est de type fini. Le théorème~\ref{th-pfiP} montre alors que $\Delta_d^{\Si_d}\mathrm{Cr}_d(F)\simeq i_d\big(\tilde{\Delta}_d^{\Si_d}\mathrm{Cr}_d(F)\big)$
appartient à $\pf_\infty(\F(A,k))$.

Par la proposition~\ref{pr-pira}, il existe dans $\F(A,k)$ une suite exacte $0\to T\to F\to\Delta_d^{\Si_d}\mathrm{Cr}_d(F)\to X\to 0$
avec $T$ et $X$ dans $\pol_{d-1}(A,k)$. Quotient du foncteur de type fini $\Delta_d^{\Si_d}\mathrm{Cr}_d(F)$, $X$ est de type fini, et appartient donc à $\pf_\infty(\F(A,k))$ par l'hypothèse de récurrence. Le fait que $F$ soit de type fini, $\Delta_d^{\Si_d}\mathrm{Cr}_d(F)$ de présentation finie et $X$ dans $\pf_2(\F(A,k))$ implique formellement \cite[prop.~1.5]{DT-schw} que $T$ est de type fini. L'hypothèse de récurrence montre donc que $T$ appartient à $\pf_\infty(\F(A,k))$, et \cite[prop.~1.5]{DT-schw} que $F$ appartient également à cette classe de foncteurs.

Ainsi, tout foncteur de type fini $F$ de $\pol_d(A,k)$ est dans $\pf_\infty(\F(A,k))$, et en particulier de présentation finie dans $\F(A,k)$. Cela implique qu'un tel foncteur est noethérien : si $G$ est un sous-foncteur de $F$, alors $F/G$, quotient de $F$, est de type fini et dans $\pol_d(A,k)$, donc de présentation finie dans $\F(A,k)$, donc $G$ est nécessairement de type fini, d'où le théorème.
\end{proof}

\subsection{Deuxième théorème de noethérianité}

Nous allons voir que l'on peut s'affranchir de l'hypothèse de noethérianité de $A$ dans l'énoncé précédent pour obtenir la noethérianité locale de $\pol_d(A,k)$ (théorème~\ref{th-noethbis} ci-après), mais pas pour la propriété $pf_\infty$ (remarque ci-dessous).

\begin{rem}\label{rq-paspf2} Supposons que $k$ est un corps de caractéristique $p>0$ et considérons l'anneau commutatif $A=(\mathbb{Q}/\mathbb{Z})^{\oplus\mathbb{N}}\rtimes\mathbb{Z}$ extension de carré nul de $\mathbb{Z}$ par le groupe abélien $(\mathbb{Q}/\mathbb{Z})^{\oplus\mathbb{N}}$. Alors $A_k\simeq k$, de sorte que toutes les hypothèses du théorème~\ref{th-princ}, \emph{sauf la noethérianité de $A$}, sont vérifiées. Pour autant, l'idéal des éléments de $A$ annulés par $p$ n'est pas de type fini, de sorte que la catégorie additive $\mathbf{P}(A)$ ne vérifie pas la condition $(T_{2,p})$ de \cite[§\,5.1]{DT-schw}. Ainsi, \cite[th.~5.9]{DT-schw} montre qu'il existe un foncteur additif de type fini dans $\F(A,k)$ qui ne vérifie pas la propriété $pf_\infty$.
\end{rem}

La généralisation de la propriété noethérienne donnée par le théorème~\ref{th-princ} se déduira de celui-ci par l'intermédiaire de quelques lemmes simples que nous donnons maintenant. Les premiers relèvent de la seule théorie des anneaux.

Dans l'énoncé suivant, si $A$ est un anneau, on note $A_\mathrm{tor}$ l'idéal bilatère de $A$ constitué des éléments de $\mathbb{Z}$-torsion.

\begin{lm}\label{lm-strZan} Tout anneau commutatif noethérien $k$ est isomorphe au produit direct d'une $\mathbb{Z}[S^{-1}]$-algèbre $A$ telle que $A/A_\mathrm{tor}$ soit \emph{fidèlement plate} sur $\mathbb{Z}[S^{-1}]$, pour une certaine partie multiplicative $S$ de $\mathbb{Z}$, et d'un nombre fini d'anneaux dont la caractéristique est une puissance d'un nombre premier.
\end{lm}

\begin{proof} Si $k$ est de caractéristique non nulle, le résultat est clair, on suppose donc $k$ de caractéristique nulle. Soit $S$ la partie multiplicative de $\mathbb{Z}$ constituée des entiers qui sont inversibles dans l'anneau quotient $k/k_\mathrm{tor}$. Il est clair que $k/k_\mathrm{tor}$ est une $\mathbb{Z}[S^{-1}]$-algèbre fidèlement plate.

Si $p$ est un nombre premier appartenant à $S$, il existe $x\in k$ et des entiers $i\in\mathbb{N}$ et $m\in\mathbb{N}^*$, avec $m$ étranger à $p$, tels que $mp^i(1-px)=0$. Quitte à modifier $x$, on peut supposer que $m=1$ --- en effet, toute identité de Bézout $pa+mb=1$ implique $0=p^i(1-pa)(1-px)=p^i(1-py)$ avec $y=a+x-pax$. L'élément $p^i x^i$ de $k$ est alors idempotent, de sorte que $k$ est isomorphe au produit de $k/(p^i x^i)$, dont la caractéristique est une puissance de $p$ (car $p^i=p^{i+1}x$), et de $k/(1-p^i x^i)$, dans lequel $p$ est inversible. Comme $k_\mathrm{tor}$ n'a une composante $p$-primaire non nulle que pour un nombre fini de nombres premiers $p$ (puisque c'est un idéal de type fini de $k$), en itérant cette décomposition, on obtient le résultat souhaité.
\end{proof}

Les énoncés suivants, classiques, sont laissés en exercice.

\begin{lm}\label{lm-nfp} Soient $K\to L$ un morphisme d'anneaux commutatifs fidèlement plat et $A$ une $K$-algèbre. Si la $L$-algèbre $A\otimes_K L$ est noethérienne à gauche, alors $A$ est noethérienne à gauche.
\end{lm}

\begin{lm}\label{lm-relevatf} Soient $A$ un anneau et $I$ un idéal bilatère nilpotent de $A$, de type fini comme idéal à gauche. Si l'anneau $A/I$ est noethérien à gauche, alors $A$ est noethérien à gauche.
\end{lm}

Le lemme suivant s'établit facilement par récurrence sur le degré polynomial $d$ à partir de la proposition~\ref{pr-pira}.

\begin{lm}\label{lm-redfpol} Soient $\A$ une catégorie additive essentiellement petite, $\E$ une catégorie abélienne et $d\in\mathbb{N}$.
\begin{enumerate}
\item\label{it1} Si la multiplication par tout élément d'une partie multiplicative $S$ de $\mathbb{Z}$ est un isomorphisme dans $\E$, alors le foncteur canonique $\A\to\A[S^{-1}]$ induit une équivalence $\pol_d(\A[S^{-1}],\E)\xrightarrow{\simeq}\pol_d(\A,\E)$.
\item\label{it2} Si la multiplication par un entier $n>0$ est nulle dans $\E$, alors la réduction modulo $n^d$ dans $\A$ induit une équivalence $\pol_d(\A/n^d,\E)\xrightarrow{\simeq}\pol_d(\A,\E)$.
\end{enumerate}
\end{lm}

\begin{thm}\label{th-noethbis} Soit $A$ un anneau. On suppose que $k$ est noethérien et qu'il existe une $k$-algèbre commutative essentiellement de type fini $L$ telle que $A_k$ soit une $L$-algèbre finie. Alors tout foncteur polynomial de type fini de $\F(A,k)$ est noethérien.
\end{thm}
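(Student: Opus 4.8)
The plan is to reduce to Theorem~\ref{th-princ} by replacing the arbitrary ring $A$ by a \emph{left noetherian} ring $A'$ for which the ambient category of polynomial functors is unchanged. Fix a finitely generated polynomial functor $F$, of degree $d$, and recall that subobjects of a degree-$d$ functor stay polynomial of degree $\le d$, so $\pol_d$ being thick in $\F$, noetherianity of $F$ may be tested inside $\pol_d(A,k)$. Since noetherianity, finite generation and polynomial degree are all detected on the factors of a finite product of abelian categories, the first step is to apply Lemme~\ref{lm-strZan} to write $k\simeq k_0\times k_1\times\dots\times k_m$, where $k_0$ is a $\mathbb{Z}[S^{-1}]$-algebra with $k_0/(k_0)_{\mathrm{tor}}$ faithfully flat over $\mathbb{Z}[S^{-1}]$ and each $k_j$ ($j\ge 1$) has characteristic a prime power. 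Each $k_j$ is noetherian (direct factor of $k$), the hypothesis passes to each factor (with $A_{k_j}$ finite over the essentially finite-type $k_j$-algebra $L\otimes_k k_j$), and $\F(A,k)\simeq\prod_j\F(A,k_j)$ carries $F$ to a tuple of finitely generated polynomial functors. It therefore suffices to treat each factor, i.e. to assume that $k$ is of one of the two above types.

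In both cases the scheme is the same: I build a left noetherian ring $A'$, an equivalence $\pol_d(A',k)\xrightarrow{\simeq}\pol_d(A,k)$ coming from Lemme~\ref{lm-redfpol}, and an isomorphism $A'_k\simeq A_k$, so that $A'$ satisfies all the hypotheses of Theorem~\ref{th-princ}. That theorem gives noetherianity of finitely generated objects of $\pol_d(A',k)$, which transports across the equivalence since finite generation and noetherianity are intrinsic categorical notions (I deliberately transport \emph{only} noetherianity, not $pf_\infty$, in accordance with Remarque~\ref{rq-paspf2}). Throughout I use that $A_k$ is left noetherian: it is finite over the commutative ring $L$, which is noetherian, being essentially of finite type over the noetherian ring $k$.

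When $k$ is a $\mathbb{Z}[S^{-1}]$-algebra, every $s\in S$ acts invertibly on $k$-modules, so Lemme~\ref{lm-redfpol}(\ref{it1}) gives $\pol_d(A\otimes_\mathbb{Z}\mathbb{Z}[S^{-1}],k)\simeq\pol_d(A,k)$; I set $A':=A\otimes_\mathbb{Z}\mathbb{Z}[S^{-1}]$, for which $A'_k\simeq A_k$. Noetherianity of $A'$ follows by faithfully flat descent: $\mathbb{Z}[S^{-1}]\to k/k_{\mathrm{tor}}$ is faithfully flat, and $A'\otimes_{\mathbb{Z}[S^{-1}]}(k/k_{\mathrm{tor}})\simeq A\otimes_\mathbb{Z}(k/k_{\mathrm{tor}})$ is a quotient of the noetherian ring $A_k$, hence noetherian, so Lemme~\ref{lm-nfp} applies. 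When instead $k$ has characteristic $p^r$, multiplication by $p^r$ vanishes on $k$-modules, so Lemme~\ref{lm-redfpol}(\ref{it2}) gives $\pol_d(A/p^{rd}A,k)\simeq\pol_d(A,k)$; I set $A':=A/p^{rd}A$, again with $A'_k\simeq A_k$. Here $I:=pA'$ is nilpotent ($I^{rd}=0$) and, $p$ being a central integer, is the principal left ideal generated by $p$, hence finitely generated; and $A'/I=A/pA$ is left noetherian by the same descent argument applied to the faithfully flat map $\mathbb{F}_p\to k/pk$ (note $k/pk\ne 0$, since $1=pa$ would force $1=p^ra^r=0$). Lemme~\ref{lm-relevatf} then shows that $A'$ is left noetherian.

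The main obstacle is precisely the characteristic-$p^r$ case, where $\mathbb{Z}\to k$ is not flat, so one cannot descend noetherianity from $A_k$ to $A$ directly. The device that rescues the argument is to first reduce $A$ modulo $p^{rd}$ (legitimate because the target category kills $p^r$), and then to exploit that the obstructing nilpotent ideal $pA'$ is \emph{central} — whence automatically finitely generated as a left ideal — so that Lemme~\ref{lm-relevatf} reduces noetherianity of $A'$ to that of $A/pA$, handled by the field-coefficient descent above. The remaining verifications, namely the compatibility of the decomposition of $k$ with all finiteness notions and the base-change identities $A'_k\simeq A_k$, are routine.
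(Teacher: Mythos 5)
Your proof is correct and follows essentially the same route as the paper's: decompose $k$ via the Lemme~\ref{lm-strZan} product splitting, handle the prime-power-characteristic factor by reducing $A$ modulo a power of $p$ and combining Lemme~\ref{lm-nfp} (descent along the faithfully flat $\mathbb{F}_p\to k/pk$) with Lemme~\ref{lm-relevatf}, handle the $\mathbb{Z}[S^{-1}]$-factor by inverting $S$ and applying Lemme~\ref{lm-nfp}, and conclude in both cases through Lemme~\ref{lm-redfpol} and le théorème~\ref{th-princ}. The only differences from the paper are cosmetic: you perform the product decomposition of $k$ at the start rather than at the end, and you apply Lemme~\ref{lm-relevatf} directly to $A':=A/p^{rd}$ with the central nilpotent ideal $pA'$ instead of stating noetherianity of $A/p^N$ for all $N$.
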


\begin{proof} L'hypothèse entraîne que l'algèbre $A_k$ est noethérienne (à gauche et à droite).

Supposons d'abord que la caractéristique de $k$ est une puissance non triviale d'un nombre premier $p$. L'anneau $A_k/p\simeq (A/p)\otimes_{\mathbb{Z}/p} (k/p)$, quotient de $A_k$, est  noethérien, donc  $A/p$ l'est également (lemme~\ref{lm-nfp}), et $A/p^N$ est noethérien pour tout $N\in\mathbb{N}$ (lemme~\ref{lm-relevatf}). Le théorème~\ref{th-princ} montre donc que la catégorie $\pol_d(A/p^N,k)$ est localement noethérienne pour tous $d, N\in\mathbb{N}$, de sorte qu'on conclut par le lemme~\ref{lm-redfpol}.\ref{it2}.

Supposons maintenant que $k$ est une $\mathbb{Z}[S^{-1}]$-algèbre telle que $k/k_\mathrm{tor}$ soit fidèlement plate, pour une partie multiplicative $S$ de $\mathbb{Z}$. Alors la noethérianité de $A_k$ entraîne celle de $A\otimes_\mathbb{Z}(k/k_\mathrm{tor})\simeq A[S^{-1}]\otimes_{\mathbb{Z}[S^{-1}]}(k/k_\mathrm{tor})$, donc de $A[S^{-1}]$ (lemme~\ref{lm-nfp}), de sorte que le théorème~\ref{th-princ} implique que la catégorie $\pol_d(A[S^{-1}],k)$ est localement noethérienne pour tout $d$. Le lemme~\ref{lm-redfpol}.\ref{it1} montre qu'il en est de même pour $\pol_d(A,k)$.

La conclusion découle des deux cas particuliers précédents et du lemme~\ref{lm-strZan}.
\end{proof}

\subsection{Quelques applications}

\begin{app}
Si $k$ est un \emph{corps} commutatif et $d\ge 1$ un entier, alors les assertions suivantes sont équivalentes :
\begin{enumerate}
    \item le corps $k$ est de type fini ;
    \item la catégorie $\pol_d(k,k)$ est localement noethérienne ;
    \item la catégorie $\pol_{d+1}(k,\mathbb{Z})$ est localement noethérienne ;
     \item la catégorie $\Pp_{d;\mathbb{Z}}(k,k)$ est localement noethérienne ;
    \item la catégorie $\Pp_{d+1;\mathbb{Z}}(k,\mathbb{Z})$ est localement noethérienne.
\end{enumerate}

En effet, comme un corps commutatif de type fini est un anneau commutatif essentiellement de type fini, la première assertion implique les autres par les théorèmes~\ref{th-pstrnoeth} et~\ref{th-princ}. La réciproque se déduit du fait que si $k$ est un corps commutatif et $n\ge 2$ un entier, alors l'anneau $k^{\otimes_\mathbb{Z}n}$ n'est noethérien que si $k$ est un corps de type fini, ainsi que des observations générales suivantes.

Si la catégorie $\pol_d(A,k)$ est localement noethérienne, alors la catégorie quotient $\pol_i(A,k)/\pol_{i-1}(A,k)$ est localement noethérienne pour tout entier $i\le d$, d'où l'on déduit facilement en utilisant la proposition~\ref{pr-pira} que la $k$-algèbre $A_k^{\otimes i}$ est noethérienne à gauche. L'analogue de la proposition~\ref{pr-pira} pour les foncteurs polynomiaux stricts montre de même que, si $\Pp_{d;k}(A,B)$ est localement noethérienne, où $A$ et $B$ sont des $k$-algèbres, alors $A^{\otimes d}\otimes B$ est noethérienne à gauche.
\end{app}

\begin{rem}Lorsque $d!$ est inversible dans $A_k$, le recollement de Pirashvili devient trivial, de sorte qu'il y a \emph{équivalence} entre la noethérianité locale de $\pol_d(A,k)$ et le caractère noethérien à gauche de $A_k^{\otimes i}$ pour tout $i\le d$. Nous ignorons si c'est encore le cas si l'on omet l'hypothèse d'inversibilité de $d!$ dans $A_k$. De surcroît, le problème de la préservation de la noethérianité par produit tensoriel pour des $k$-algèbres (même commutatives et lorsque $k$ est un corps) est notoirement difficile. Ainsi, trouver une condition nécessaire et suffisante simple sur l'anneau $A$ pour que $\pol_d(A,k)$ soit localement noethérienne semble hors de portée.
\end{rem}

\begin{app}
Nos résultats permettent d'améliorer \cite[cor.~6.22]{DT-schw}, relatif à la propriété de finitude homologique suivante pour un anneau $A$ :
\begin{itemize}
\item[(HGLF)] Pour tout $n\in\mathbb{N}$, le groupe abélien $H_n(\underset{i\in\mathbb{N}}{\col}\operatorname{GL}_i(A);\mathbb{Z})$ est de type fini.
\end{itemize}
En effet, la même démonstration que celle de \cite{DT-schw}, combinée au théorème~\ref{th-princ}, montre que si $A$ est une algèbre finie sur un anneau commutatif essentiellement de type fini et $I$ un idéal bilatère nilpotent de $A$ \emph{dont le groupe additif sous-jacent est de type fini}, et que l'anneau $A/I$ vérifie la propriété (HGLF), alors $A$ vérifie également (HGLF).

Par exemple, si $R$ est une algèbre finie sur un anneau commutatif essentiellement de type fini, que $R$ vérifie (HGLF) et que $V$ un $(R,R)$-bimodule dont le groupe abélien sous-jacent est de type fini, alors l'extension de carré nul $R\ltimes V$ vérifie (HGLF).
\end{app}

\bibliographystyle{plain}
\bibliography{bib-TJM.bib}

 \end{document}